\definecolor{lightgray}{rgb}{0.9,0.9,0.9} %
\newtheorem{thm}{Theorem}[section]
\newtheorem{lemma}{Lemma}[section]
\renewenvironment{proof}[1][\proofname]{%
  \par\pushQED{\qed}\normalfont%
  \topsep6\p@\@plus6\p@\relax
\trivlist\item[\hskip\labelsep\bfseries#1\@addpunct{.}]%
  \ignorespaces
}{%
  \popQED\endtrivlist\@endpefalse
}
\begin{document}
\begin{center}
    \textbf{\large Soliton Dynamics and Modulation Instability in the (3+1)-dimensional Zakharov–Kuznetsov equation: A Lie Symmetry Approach}
\end{center}
\begin{center}
\author[{\small \textbf{Anshika Singhal}\textsuperscript{1}, \textbf{Urvashi Joshi}\textsuperscript{2}, \textbf{Rajan Arora}\textsuperscript{3}\\ \textsuperscript{1,2,3} \text{Department of Applied Mathematics and Scientific Computing,}\\ \text{Indian Institute of Technology, Roorkee, India}}\\
\end{center}
\begin{center}
\textsuperscript{1}anshika\textunderscore s@amsc.iitr.ac.in,\textsuperscript{2}urvashi\textunderscore j@amsc.iitr.ac.in.\textbf
\\\textsuperscript{3}rajan.arora@as.iitr.ac.in.\textbf{(Corresponding Author)}
\end{center}
\section*{Abstract}
\noindent The core focus of this research work is to obtain invariant solutions and conservation laws of the (3+1)-dimensional Zakharov–Kuznetsov (ZK) equation, a higher-dimensional generalization of the  Korteweg–de Vries (KdV) equation, which describes the phenomenon of wave stability and soliton propagation. Lie symmetry analysis has been applied to derive infinitesimal generators and classify the optimal subalgebras. Utilizing them, we construct exact invariant solutions that reveal how waves retain their shape as they travel, how they interact in space, and the impact of magnetic fields on wave propagation. Further, by implementing the traveling wave transformation, we derive additional exact solutions, including those exhibiting kink-type solitons. It also concludes with the conservation laws and the non-linear self-adjointness property. Our examination is broadened to cover modulation instability and gain spectrum. To contextualize our results, we compare the solutions obtained from the Lie symmetry method with those derived using the modified simple equation (MSE) method and other symbolic techniques reported in recent literature. To validate the accuracy of the analytical solutions obtained via Lie symmetry, a numerical method is implemented. A review of the ZK equation's physical background and mathematical complexity is explored, emphasizing the limitations of symbolic approaches. \\

\noindent 
\textbf{Keywords:}\hspace{0.1cm}Lie symmetry analysis, Optimal subalgebra, Parameter analysis, Traveling wave solution, Modulation instability, Self-adjointness, Conservation laws, Numerical method, MSE method.
\section{Introduction}
\quad \quad The Zakharov-Kuznetsov equation, a higher-dimensional generalization of the Korteweg-de Vries (KdV) equation, plays an important role in space plasmas and astrophysics, thereby helping in understanding the anisotropic nature of the propagation of waves along the magnetic field lines. It was first derived by Zakharov and Kuznetsov \cite{zakharov1974three} to describe the evolution of weakly nonlinear ion-acoustic waves in a plasma composed of cold ions and hot isothermal electrons under the influence of a uniform magnetic field. Due to its broad relevance in fluid dynamics, plasma physics, and nonlinear waves, the ZK equation \cite{wazwaz2010partial} has been the focus of research, particularly in analytical and numerical perspectives. The general form of the three-dimensional ZK equation \cite{islam2015generalized} is expressed as 
\begin{equation}\label{11}
u_{t}+auu_{x}+u_{x x}+u_{y y}+u_{z z}=0,
\end{equation}
where $a$ is the positive constant representing the strength of nonlinearity in plasma. The function $u=u(x, y, z, t)$ represents the amplitude of the wave, indicating how the height of the wave changes in space with the coordinates $(x, y, z)$ and the time $t$. $u_t$ represents the time evolution of the wave. $u_{x x},u_{y y}$ and $u_{z z} $ are the second-order spatial derivatives that represent how the wave spreads out in space.

In contrast to the  Kadomtsev–Petviashvili (KP) equation, the ZK equation lacks integrability through the inverse scattering method, thereby restricting the analytical techniques for extracting exact solutions. Therefore, many direct and semi-analytical techniques such as the modified simple equation (MSE) method \cite{khan2014exact}, the Riccati method \cite{niwas2024exploring}, the ($\frac{G^\prime}{G}$)-expansion method \cite{mathanaranjan2022effective}, the Bäcklund transformations\cite{zhou2022auto}, and variational iteration methods have been developed to obtain solitary and periodic solutions. These symbolic methods commonly employ assumed ansatz forms or traveling wave transformations, which are effective in generating particular solutions but generally fail to reveal the underlying symmetries or to classify the families of invariant solutions.

 Sophus Lie \cite{fritzsche1999sophus,helgason1992sophus,schwarz1988symmetries} introduced the transformation group method in the late 19th century to find invariant solutions, providing a systematic approach to identifying the continuous symmetries of nonlinear differential equations. By determining infinitesimal generators, optimal subalgebras, and symmetry reductions, one can derive the exact invariant solutions and the conserved vectors. Several top-notch research articles \cite{lou2005non,zhai2019lie,gazizov1998lie} and textbooks \cite{kosmann2010groups,cantwell2004introduction,bluman1990simplifying} address this method and its significance in various physical problems. Despite being a classical technique, Lie symmetry methods are still underexploited in the context of higher-dimensional nonlinear evolution equations like the (3+1)-dimensional ZK equation.

 Addressing this gap, this study applies a comprehensive analysis of the (3+1)-dimensional ZK equation. We find Lie symmetries and infinitesimal generators for which the equation remains invariant. Using infinitesimal generators, we derive seven vector fields, which reduces the system of PDEs into ODEs to derive invariant solutions concerning the invariants. Traveling wave solutions have been constructed to derive solutions as kink-type solitons. Modulation instability has been evaluated with the help of the dispersion relation. Conservation laws have been constructed using the non-linear self-adjointness property \cite{tracina2014nonlinear}. Conservation laws can be derived by many methods like the Direct method, Noether's theorem \cite{kosmann2011noether,byers1998noether}, the multiplier method \cite{naz2012conservation,yasar2017symmetries}, the variational method, and many more, but Ibragimov's \cite{ibragimov2011nonlinear,ibragimov2013nonlinear,anco2017incompleteness} method is considered a more effective method to construct conservation laws as it applies to both variational and non-variational systems.
\begin{center}
The main pioneering outcomes of this paper are mentioned below:
\end{center}
\begin{enumerate} 
\item 
Using Lie symmetry analysis, we derive Lie symmetries, a commutator table, and an optimal system for the ZK equation.\item Traveling wave solutions and modulation instability have been analyzed to further understand the equation's behavior. \item 3D visualizations have been created that highlight the behavior of wave amplitude.\item To acknowledge physical properties, conservation laws are derived.
\item A numerical method is implemented to validate the analytical solutions derived from Lie symmetry. \item We compare our results with recent symbolic solutions from recent literature, emphasizing the structural understanding offered
 by the Lie symmetry framework.
\end{enumerate}
\subsection{Outline}
\quad \quad The paper structure is highlighted as follows: In Section \eqref{2}, Lie symmetries, infinitesimal generators, commutator tables, and vector fields are constructed. Section \eqref{3} outlines the symmetry group of the model. Section \eqref{4} gives the optimal subalgebra of the generated vector fields. Section \eqref{5} is devoted to finding invariant solutions of the system by symmetry reductions. Section \eqref{6} outlines the behavior of wave amplitude using 3D visualizations. Section \eqref{7} deals with the traveling wave solution. In Section \eqref{8}, we established the modulation instability of the governing equation. Sections \eqref{9} and \eqref{10} deal with the numerical method and its comparison with the analytical method. Within Section \eqref{11a}, we establish the property of non-linear self-adjointness. Section \eqref{12} is dedicated to deriving conservation laws using the property of non-linear self-adjointness. Section \eqref{13} is devoted to the comparison with the MSE method. Conclusions are provided in Section \eqref{14}.
\section{A Preliminary Analysis of Lie Symmetry}\label{2}
A one-parameter Lie group of transformation for Eq. \eqref{11}, is expressed as:  
\begin{equation}
\begin{split}
\hat{s}_{i}=s_{i}+\varepsilon  \xi_{i}(x, y, z, t, u)+O\left(\varepsilon^{2}\right)\text{,}
\end{split}
\end{equation}
where $s_{i}$ is $x, y, z, t$ and $u$, respectively, for $i=x, y, z, t, u$ and $\varepsilon$ is the group parameter and $\xi_{x}, \xi_{y}, \xi_{z}, \xi_{t}, \xi_{u}$  are the infinitesimal generators.
Thus, the vector field derived from the Lie group of transformation is as follows:
\begin{equation}
\mathbb{Z}=\xi_{i}(x, y, z, t, u) \partial_{i}\text{,}
\end{equation}
for $i= x, y, z, t, u$\text{.}
For the nonlinear ZK equation, $\mathbb{Z}$ must satisfy the invariance condition:
\begin{equation}
{Pr}^{(2)} \mathbb{Z}(\Delta)|_{\Delta=0} =0,
\end{equation}
where ${Pr}^{(2)}$ is the second-order prolongation of $\mathbb{Z}$ for Eq. \eqref{11}, as follows:\\
\begin{equation}
{Pr}^{(2)}=  \mathbb{Z}+\xi_{t}^{u} \frac{\partial}{\partial u_{t}}+\xi_{x}^{u} \frac{\partial}{\partial u_{x}}+\xi_{x x}^{u} \frac{\partial}{\partial u_{x x}}+\xi_{y}^{u} \frac{\partial}{\partial u_{y}}+\xi_{y y}^{u} \frac{\partial}{\partial u_{y y}}+\xi_{z}^{u} \frac{\partial}{\partial u_{z}}+\xi_{z z}^{u} \frac{\partial}{\partial u_{z z}} +\xi_{u} \frac{\partial}{\partial u}\text{.}
\end{equation}
Owing to the extensive calculations, we relied on Maple to derive the following system of PDEs:
\begin{equation}\label{k}
\begin{split}
&\xi_{tt}^{t}=\xi_{tt}^{x}=\xi_{yy}^{z}=\xi_{u}^{t}=\xi_{x}^{t}=\xi_{y}^{t}=\xi_{z}^{t}=\xi_{u}^{x}=0, \quad \xi_{x}^{x} = \frac{\xi_{t}^{t}}{2}, \quad \xi_{y}^{x} = \xi_{z}^{x} = \xi_{t}^{y} = \xi_{u}^{y} = \xi_{x}^{y} = 0, \\
&\xi_{y}^{y} = \frac{\xi_{t}^{t}}{2}, \quad \xi_{z}^{y} = -\xi_{y}^{z}, \xi_{t}^{z} = \xi_{u}^{z} = \xi_{x}^{z} = 0, \quad \xi_{z}^{z} = \frac{\xi_{t}^{t}}{2}, \quad \xi_{u} = \frac{-\left(\xi_{t}^{t} au + 2\xi_{t}^{x}\right)}{2a}.
\end{split}
\end{equation}
On solving  Eq. \eqref{k}, we derive the infinitesimals as:
\begin{equation}
\begin{split}
&\xi_{x}=\frac{c_{1}x}{2}+c_{5} t+c_{6}\text{,} \quad\xi_{y}=\frac{c_{1}y}{2}+{c_{3}z}+c_{4}\text{,} \quad\xi_{z}=-{c_{3}y}+\frac{c_{1}z}{2}+c_{7}\text{,}\quad\xi_{t}={c_{1}t}+c_{2} \text{,}\quad\xi_{u}=\frac{-c_{1}u}{2}+\frac{c_{5}}{a}\text{,} \\ 
\end{split}
\end{equation}
where $c_{i}$'s , $i=1,2,3,4,5,6,7$, are the arbitrary constants.\\

\begin{table}[h]
    \centering
    \renewcommand{\arraystretch}{1.5}
    \begin{tabular}{c c l}
        \hline
        Conditions & Generators & Symmetries \\ 
        \hline
        $c_1 \neq 0$ & $\mathfrak{D}_1 = \frac{x}{2} \partial_{x}+ \frac{y}{2} \partial_{y}+ \frac{z}{2} \partial_{z}\ + {t}\partial_{t}-\frac{u}{2}\partial_{u}$ & Dilation in $x,y,z,t, u$ \\ 
        $c_2 \neq 0$ & $\mathfrak{D}_2 =  \frac{\partial}{\partial t} $ & Translation in $t$ \\ 
        $c_3 \neq 0$ & $\mathfrak{D}_3 = z\frac{\partial}{\partial y}-y\frac{\partial}{\partial z} $ & Rotation \\ 
        $c_4 \neq 0$ & $\mathfrak{D}_4 =  \frac{\partial}{\partial y} $ & Translation in $y$ \\ 
        $c_5 \neq 0$ & $\mathfrak{D}_5 =t \frac{\partial}{\partial x}+\frac{1}{a}\frac{\partial}{\partial u}$ & Galilean boost \\ 
        $c_6 \neq 0$ & $\mathfrak{D}_6 =  \frac{\partial}{\partial x} $ &Translation in $x$ \\ 
        $c_7 \neq 0$ & $\mathfrak{D}_7 = \frac{\partial}{\partial z}$ & Translation in $z$ \\ 
        \hline
    \end{tabular}
  
    \caption{Infinitesimal Generators and Their Symmetries.}
\end{table}
Now, we obtain Table \ref{Table 1} which shows the commutation relations between the Lie algebras ${\mathfrak{D}_{1},\mathfrak{D}_{2},\mathfrak{D}_{3},\mathfrak{D}_{4},\mathfrak{D}_{5},\mathfrak{D}_{6}}$,\\
${\mathfrak{D}_{7}}$, which has entries like [$\mathfrak{D}_i,\mathfrak{D}_j$]=$\mathfrak{D}_{i}\mathfrak{D}_{j}-\mathfrak{D}_{j}\mathfrak{D}_{i}$.
\renewcommand{\arraystretch}{1.5}
\begin{table}[H]
    \centering
    \begin{tabular}{l|lllllll}
    \hline
        $\star$ & $\mathfrak{D}_1$ & $\mathfrak{D}_2$ & $\mathfrak{D}_3$ & $\mathfrak{D}_4$ & $\mathfrak{D}_5$ & $\mathfrak{D}_6$ & $\mathfrak{D}_7$  \\  \hline
        $\mathfrak{D}_1$ & 0 & $-\mathfrak{D}_2$ & 0 & ${\frac{-\mathfrak{D}_4}{2}}$ & ${\frac{\mathfrak{D}_5}{2}}$ & ${\frac{-\mathfrak{D}_6}{2}}$ & ${\frac{-\mathfrak{D}_7}{2}}$ \\ 
       $\mathfrak{D}_2$ & $\mathfrak{D}_2$ & 0 & 0 & 0 & $\mathfrak{D}_6$ & 0 & 0\\
        $\mathfrak{D}_3$ & 0 & 0 & 0 & $\mathfrak{D}_7$ & 0 & 0 & -$\mathfrak{D}_4$  \\ 
        $\mathfrak{D}_4$ & ${\frac{\mathfrak{D}_4}{2}}$ & 0 &$-\mathfrak{D}_7$ & 0 & 0 & 0 & 0 \\
        $\mathfrak{D}_5$ & ${\frac{-\mathfrak{D}_5}{2}}$ & -$\mathfrak{D}_6$ & $0$ & 0 & 0 & 0 & 0 \\ 
        $\mathfrak{D}_6$ &  ${\frac{\mathfrak{D}_6}{2}}$ & 0 & $0$ & 0 & 0 & 0 & 0 \\ 
        $\mathfrak{D}_7$ &  ${\frac{\mathfrak{D}_7}{2}}$ & 0 & $\mathfrak{D}_4$ & 0 & 0 & 0 & 0 \\ \hline
    \end{tabular}
    \caption{Commutator Table.}
    \label{Table 1}
\end{table}
\section{Lie Symmetry Group for ZK Equation} \label{3}
\quad \quad Let us consider the one-parameter group $\tilde{\mathcal{V}_i^\varepsilon}$ generated by $\mathfrak{D}_{i}\text{,} \hspace{0.2cm} i=1,2, \ldots, 7$\text{,}  as
\begin{equation}
\tilde{\mathcal{V}_i^\varepsilon}:(x, y, z, t, u) \rightarrow(\ddot{x}, \ddot{y},\ddot{z}, \ddot{t}, \ddot{u}) \text {, }
\end{equation}
that generates the invariant solutions. We must figure out the following IVP:
\begin{equation}
\begin{aligned}
& \frac{d}{d \varepsilon}(\ddot{x}, \ddot{y},\ddot{z}, \ddot{t}, \ddot{u})=(\xi_{x}, \xi_{y}, \xi_{z}, \xi_{t}, \xi_{u})\hspace{0.1cm}\text { with } \hspace{0.1cm}\left.(\ddot{x}, \ddot{y},\ddot{z}, \ddot{t}, \ddot{u})\right|_{\varepsilon=0}=(x, y, z, t, u) \text{.}
\end{aligned}
\end{equation}
\begin{thm}
If the function $u= \mathcal{U}(x, y, z, t)$ satisfies Eq. \eqref{11}, then the group action  yields the following one-parameter family of solutions:
$$
\begin{array}{ll}
& \tilde{\mathcal{V}_1^\varepsilon}: u= e^{\frac{-\varepsilon}{2}}\mathcal{U}\left(x e^{\frac{-\varepsilon}{2}}, y e^{\frac{-\varepsilon}{2}},z e^{-\frac{\varepsilon}{2}} , t e^{-\varepsilon}\right) \text{,}\quad
\tilde{\mathcal{V}_2^\varepsilon}: u=\mathcal{U}(x, y, z, t-\varepsilon) \text{,} \quad
 \tilde{\mathcal{V}_3^\varepsilon}: u=\mathcal{U}(x, y-\varepsilon z,z+\varepsilon y , t)\text{,}\\
& \tilde{\mathcal{V}_4^\varepsilon}: u=\mathcal{U}(x, y-\varepsilon, z, t) \text{,} \quad
 \tilde{\mathcal{V}_5^\varepsilon}: u=\mathcal{U}(x-\varepsilon t, y, z, t)-\frac{\varepsilon}{a}  \text{,} \quad
 \tilde{\mathcal{V}_6^\varepsilon}: u=\mathcal{U}(x-\varepsilon, y, z, t) \text{,} \quad
\tilde{\mathcal{V}_7^\varepsilon}: u=\mathcal{U}(x, y, z-\varepsilon, t)  \text{.}
\end{array}
$$
\end{thm}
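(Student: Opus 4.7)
The plan is to treat each of the seven generators independently, since the one‑parameter groups $\tilde{\mathcal{V}}_i^\varepsilon$ are the flows of the individual $\mathfrak{D}_i$. For each $i$, I would solve the characteristic IVP
\begin{equation*}
\frac{d}{d\varepsilon}(\ddot{x},\ddot{y},\ddot{z},\ddot{t},\ddot{u})=(\xi_{x},\xi_{y},\xi_{z},\xi_{t},\xi_{u}),\quad (\ddot{x},\ddot{y},\ddot{z},\ddot{t},\ddot{u})\big|_{\varepsilon=0}=(x,y,z,t,u),
\end{equation*}
obtain the flow explicitly, invert it to write the old coordinates in terms of the new, and substitute into $u=\mathcal{U}(x,y,z,t)$ to read off the transformed solution. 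No extra verification that the result solves Eq.~\eqref{11} is needed, because that is precisely the content of the prolongation identity ${Pr}^{(2)}\mathbb{Z}(\Delta)|_{\Delta=0}=0$ established in Section~\ref{2}: each $\mathfrak{D}_i$ is a Lie point symmetry, so its flow pulls solutions to solutions.

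For the pure translation generators $\mathfrak{D}_2,\mathfrak{D}_4,\mathfrak{D}_6,\mathfrak{D}_7$ the flow is immediate—one coordinate shifts by $\varepsilon$—and inversion yields the four obvious shifted forms. For the Galilean boost $\mathfrak{D}_5$ I would notice that $\xi_t=0$, so $\ddot{t}=t$ is constant along the flow; consequently the ODE for $\ddot{x}$ becomes $d\ddot{x}/d\varepsilon=\ddot{t}$, giving $\ddot{x}=x+\varepsilon t$, while $d\ddot{u}/d\varepsilon=1/a$ gives $\ddot{u}=u+\varepsilon/a$. Inversion then produces the stated $u=\mathcal{U}(x-\varepsilon t,y,z,t)-\varepsilon/a$ (the sign on the $u$-shift following from expressing the old $u$ in terms of the new).

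The scaling symmetry $\mathfrak{D}_1$ gives five \emph{decoupled} linear ODEs with rates $\tfrac12,\tfrac12,\tfrac12,1,-\tfrac12$, integrating to $\ddot{x}=xe^{\varepsilon/2}$, $\ddot{y}=ye^{\varepsilon/2}$, $\ddot{z}=ze^{\varepsilon/2}$, $\ddot{t}=te^{\varepsilon}$, $\ddot{u}=ue^{-\varepsilon/2}$. Inversion and substitution into $u=\mathcal{U}$ produce the stated similarity family with the prefactor $e^{-\varepsilon/2}$.

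The main technical step is the rotation $\mathfrak{D}_3$, where the only coupled pair arises: $d\ddot{y}/d\varepsilon=\ddot{z}$ and $d\ddot{z}/d\varepsilon=-\ddot{y}$. Differentiating once more decouples them into simple harmonic oscillators, so the exact flow is
\begin{equation*}
\ddot{y}=y\cos\varepsilon+z\sin\varepsilon,\qquad \ddot{z}=-y\sin\varepsilon+z\cos\varepsilon,
\end{equation*}
which after inversion gives the rotated argument appearing in $\tilde{\mathcal{V}}_3^\varepsilon$ (the infinitesimal form $y-\varepsilon z,\; z+\varepsilon y$ quoted in the statement is the $O(\varepsilon^2)$ truncation of this rotation). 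Assembling the seven flows completes the proof; the only subtlety to watch is the sign bookkeeping when inverting each transformation before substituting into $\mathcal{U}$.
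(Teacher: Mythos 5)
Your overall method---solving the stated IVP for each $\mathfrak{D}_i$, exponentiating to get the flow, inverting, and substituting into $u=\mathcal{U}(x,y,z,t)$---is exactly the route the paper intends (it sets up the IVP and reads off the group actions), and your handling of the translations, the dilation, and the rotation is sound; in particular you are right that the exact flow of $\mathfrak{D}_3$ is $\ddot y=y\cos\varepsilon+z\sin\varepsilon$, $\ddot z=-y\sin\varepsilon+z\cos\varepsilon$, of which the printed $\left(y-\varepsilon z,\ z+\varepsilon y\right)$ is only the first-order truncation.

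There is, however, one genuine defect: the sign for the Galilean boost $\mathfrak{D}_5$ is asserted, not derived, and the assertion contradicts your own computation. The flow is $\ddot x=x+\varepsilon t$, $\ddot t=t$, $\ddot u=u+\varepsilon/a$; expressing the old variables in terms of the new gives $x=\ddot x-\varepsilon\ddot t$ and $u=\ddot u-\varepsilon/a$, so $u=\mathcal{U}(x,y,z,t)$ becomes $\ddot u=\mathcal{U}(\ddot x-\varepsilon\ddot t,\ddot y,\ddot z,\ddot t)+\varepsilon/a$. Thus, under the same convention that yields $\mathcal{U}(x,y,z,t-\varepsilon)$ for $\tilde{\mathcal{V}}_2^{\varepsilon}$, the boost gives $u=\mathcal{U}(x-\varepsilon t,y,z,t)+\varepsilon/a$, not $-\varepsilon/a$. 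A direct substitution into Eq. \eqref{11} confirms this: with the plus sign the contribution $-\varepsilon\mathcal{U}_X$ from $u_t$ cancels against $+\varepsilon\mathcal{U}_X$ from $auu_x$, whereas with the minus sign one is left with $-2\varepsilon\mathcal{U}_X\neq 0$. The family carrying $-\varepsilon/a$ is $\mathcal{U}(x+\varepsilon t,y,z,t)-\varepsilon/a$, i.e.\ the inverse group element. So your parenthetical ``the sign on the $u$-shift following from expressing the old $u$ in terms of the new'' does not deliver the stated minus sign; you should either derive the plus-sign form and flag the discrepancy with the statement as printed, or replace $x-\varepsilon t$ by $x+\varepsilon t$---as written, that step would fail.
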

\section{One-dimensional Optimal System for ZK Equation} \label{4}
\quad \quad Prior to building the lie algebra $\mathbb{L}^{7}$, we will determine the invariants and adjoint matrix. 
\subsection{Calculation of Invariants}
\quad \quad Let us consider two representative elements of $\mathbb{L}^{7}$ as:
$\mathcal{H}=\sum_{i=1}^{7} \mathfrak{l}_{i} \mathfrak{D}_{i} \text { and}$  $\mathcal{J}=\sum_{j=1}^{7} \beta_{j} \mathfrak{D}_{j} $\text {,}
the adjoint action $\operatorname{Ad}(\exp (\varepsilon \mathcal{J}))$ on $\mathcal{H}$ is given by:
$$
\begin{aligned}
\operatorname{Ad}(\exp (\varepsilon\mathcal{J}) \mathcal{H}) & =\mathcal{H}-\varepsilon[\mathcal{J}, \mathcal{H}]+\frac{1}{2 !} \varepsilon^{2}[\mathcal{J},[\mathcal{J}, \mathcal{H}]] \ldots=e^{-\varepsilon \mathcal{J}} \mathcal{H} e^{\varepsilon \mathcal{J}} \text{.}\\
&=  \left(\mathfrak{l}_{1}\mathfrak{D}_{1}+\mathfrak{l}_{2} \mathfrak{D}_{2}+\ldots+\mathfrak{l}_{7} \mathfrak{D}_{7}\right)-\varepsilon\left[\lambda_{1} \mathfrak{D}_{1}+\lambda_{2} \mathfrak{D}_{2}+\ldots+\lambda_{7} \mathfrak{D}_{7}\right]+O\left(\varepsilon^{2}\right) \text{,}
\end{aligned}
$$
where $\lambda_{i}= \lambda_{i}\left(\mathfrak{l}_{1}, \mathfrak{l}_{2}, \ldots, \mathfrak{l}_{7}, \beta_{1}, \beta_{2}, \ldots, \beta_{7}\right), \hspace{0.1cm} i=1,2,3,4,5,6,7$, which can be solved by commutator table, and for invariance,
\begin{equation}\label{q}
\mathcal{O}\left(\mathfrak{l}_{1}, \mathfrak{l}_{2}, \ldots, \mathfrak{l}_{7}\right)=\mathcal{O}\left(\mathfrak{l}_{1}-\varepsilon \lambda_{1}, \mathfrak{l}_{2}-\varepsilon \lambda_{2}, \ldots, \mathfrak{l}_{7}-\varepsilon \lambda_{7}\right)\text{.} 
\end{equation}
On applying Taylor's expansion to the right-hand side of Eq., we obtain
\begin{equation}\label{l}
\lambda_{1}  \frac{\partial\mathcal{O}}{\partial \mathfrak{l}_{1}}+\lambda_{2}  \frac{\partial \mathcal{O}}{\partial \mathfrak{l}_{2}}+\ldots+\lambda_{7}  \frac{\partial \mathcal{O}}{\partial \mathfrak{l}_{7}}=0\text{,} 
\end{equation}
$\text{where}\hspace{0.2cm}\lambda_{1}=0\text{,}\quad \lambda_{2}=-\mathfrak{l}_{2}\beta_{1}+\mathfrak{l}_{1} \beta_{2}\text{,}\quad \lambda_{3}=0\text{,}\quad \lambda_{4}=\frac{-\mathfrak{l}_{4} \beta_{1}}{2} +\frac{\mathfrak{l}_{1} \beta_{4}}{2} -\mathfrak{l}_{7}\beta_{3} +\mathfrak{l}_{3} \beta_{7}\text{,}\quad \lambda_{5}=\frac{\mathfrak{l}_{5}\beta_{1}}{2} -\frac{\mathfrak{l}_{1} \beta_{5}}{2}\text{,}\quad \lambda_{6}=\frac{-\beta_{1}\mathfrak{l}_{6}}{2} +\frac{\beta_{6}\mathfrak{l}_{1}}{2} -\beta_{5}\mathfrak{l}_{2} +\beta_{2}\mathfrak{l}_{5}\text{,}\quad \lambda_{7}=\frac{-\beta_{1}\mathfrak{l}_{7}}{2} +\frac{\beta_{7}\mathfrak{l}_{1}}{2} +\beta_{3}\mathfrak{l}_{4} -\beta_{4}\mathfrak{l}_{3} \text{.}$ Substituting these values of $\lambda_{i}\hspace{0.1cm}\text{,}\hspace{0.1cm}  1 \leq i \leq 7$ in Eq. \eqref{l} and extracting the coefficients $\forall$ $\beta_{j}\hspace{0.1cm}\text{,}\hspace{0.1cm}  1 \leq j \leq 7$, the system of first-order PDEs can be obtained as follows:
\begin{equation}
\begin{split}
&\beta_{1}:-\mathfrak{l}_{2}\frac{\partial \mathcal{O}}{\partial \mathfrak{l}_{2}}-\frac{\mathfrak{l}_{4}}{2} \frac{\partial \mathcal{O}}{\partial \mathfrak{l}_{4}}+\frac{\mathfrak{l}_{5}}{2} \frac{\partial \mathcal{O}}{\partial \mathfrak{l}_{5}}-\frac{\mathfrak{l}_{6}}{2} \frac{\partial \mathcal{O}}{\partial \mathfrak{l}_{6}}-\frac{\mathfrak{l}_{7}}{2} \frac{\partial \mathcal{O}}{\partial \mathfrak{l}_{7}}=0 \text{,} \quad
\beta_{2} : \mathfrak{l}_{1} \frac{\partial \mathcal{O}}{\partial \mathfrak{l}_{2}}+ \mathfrak{l}_{5} \frac{\partial \mathcal{O}}{\partial \mathfrak{l}_{5}}=0\text{,}\\
&\beta_{3}:-\mathfrak{l}_{7}\frac{\partial \mathcal{O}}{\partial \mathfrak{l}_{4}}+\mathfrak{l}_{4} \frac{\partial \mathcal{O}}{\partial \mathfrak{l}_{7}}=0 \text{,} \quad
\beta_{4} :\frac{\mathfrak{l}_{1}}{2} \frac{\partial \mathcal{O}}{\partial \mathfrak{l}_{4}}-\mathfrak{l}_{3}  \frac{\partial \mathcal{O}}{\partial \mathfrak{l}_{7}}=0\text{,} \quad
\beta_{5} : -\frac{\mathfrak{l}_{1}}{2} \frac{\partial \mathcal{O}}{\partial \mathfrak{l}_{5}}-\mathfrak{l}_{2} \frac{\partial \mathcal{O}}{\partial \mathfrak{l}_{6}}=0\text{,} \\
& \beta_{6} : \frac{\mathfrak{l}_{1}}{2} \frac{\partial \mathcal{O}}{\partial \mathfrak{l}_{6}}=0\text{,} \quad
\beta_{7} :\mathfrak{l}_{3} \frac{\partial \mathcal{O}}{\partial \mathfrak{l}_{4}} +\frac{\mathfrak{l}_{1}}{2} \frac{\partial \mathcal{O}}{\partial \mathfrak{l}_{7}}=0\text{.}
\end{split}
\end{equation}
For the solution of the above system of PDEs, one can find that
\begin{equation}
\mathcal{O}\left(\mathfrak{l}_{1}, \mathfrak{l}_{2}, \mathfrak{l}_{3}, \mathfrak{l}_{4}, \mathfrak{l}_{5}, \mathfrak{l}_{6}, \mathfrak{l}_{7}\right)=\mathfrak{M}\left(\mathfrak{l}_{1}, \mathfrak{l}_{3}\right)\text{,}
\end{equation}
where $\mathfrak{M}$ can be chosen as an arbitrary function.
\subsection{Killing Form}
\begin{thm}
The Killing form related to Lie algebra $\mathbb{L}^{7}$ is
\begin{equation}
    \mathcal{K}(\mathfrak{D}, \mathfrak{D})=2\left(\mathfrak{l}_{1}^{2}-\mathfrak{l}_{3}^{2}\right)\text{.}   
 \end{equation}
\end{thm}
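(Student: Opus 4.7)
The plan is to invoke the standard definition of the Killing form,
\begin{equation*}
\mathcal{K}(X,Y) = \operatorname{tr}(\operatorname{ad}_X \circ \operatorname{ad}_Y),
\end{equation*}
and to reduce the assertion to a finite matrix computation read off from Table \ref{Table 1}. Writing a general element as $\mathfrak{D} = \sum_{i=1}^{7} \mathfrak{l}_i \mathfrak{D}_i$, bilinearity yields
\begin{equation*}
\mathcal{K}(\mathfrak{D},\mathfrak{D}) = \sum_{i,j=1}^{7} \mathfrak{l}_i \mathfrak{l}_j \, \mathcal{K}(\mathfrak{D}_i, \mathfrak{D}_j),
\end{equation*}
so it suffices to determine the symmetric $7 \times 7$ matrix with entries $\mathcal{K}_{ij} := \mathcal{K}(\mathfrak{D}_i, \mathfrak{D}_j) = \operatorname{tr}(M_i M_j)$, where $M_i$ denotes the matrix of $\operatorname{ad}_{\mathfrak{D}_i}$ in the ordered basis $\{\mathfrak{D}_1, \ldots, \mathfrak{D}_7\}$.

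First I would assemble each $M_i$ directly from the $i$-th row of Table \ref{Table 1}: its $j$-th column lists the coordinates of $[\mathfrak{D}_i, \mathfrak{D}_j]$ in the chosen basis. Two structural observations are decisive. The matrix $M_1$ is diagonal with entries $\bigl(0, -1, 0, -\tfrac{1}{2}, \tfrac{1}{2}, -\tfrac{1}{2}, -\tfrac{1}{2}\bigr)$, while $M_3$ has support only at the two off-diagonal positions $(7,4)$ and $(4,7)$ with values $+1$ and $-1$ respectively. The remaining five matrices $M_2, M_4, M_5, M_6, M_7$ are each strictly lower triangular (every nonzero entry $(a,b)$ satisfies $b < a$), hence nilpotent and with vanishing diagonal.

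Next I would compute the diagonal entries $\mathcal{K}_{ii}$: a direct calculation gives $M_1^2 = \operatorname{diag}\bigl(0, 1, 0, \tfrac{1}{4}, \tfrac{1}{4}, \tfrac{1}{4}, \tfrac{1}{4}\bigr)$, so $\mathcal{K}_{11} = 2$, and $M_3^2$ places $-1$ at positions $(4,4)$ and $(7,7)$, giving $\mathcal{K}_{33} = -2$; nilpotency of $M_2, M_4, M_5, M_6, M_7$ immediately yields $\mathcal{K}_{ii} = 0$ for $i \in \{2,4,5,6,7\}$. For the off-diagonal vanishings I would exploit three observations: (i) since $M_1$ is diagonal, $(M_1 M_j)_{kk} = (M_1)_{kk}(M_j)_{kk}$ vanishes for every $j \neq 1$ because $M_j$ has zero diagonal; (ii) a product of two strictly lower triangular matrices is again strictly lower triangular, whence $\operatorname{tr}(M_i M_j) = 0$ for all $i,j \in \{2,4,5,6,7\}$; (iii) the pairings $\operatorname{tr}(M_3 M_j)$ for $j \in \{2,4,5,6,7\}$ vanish because a diagonal contribution would require $(M_j)_{7,4}$ or $(M_j)_{4,7}$ to be nonzero, and inspection of the supports shows that none of these nilpotent matrices has a nonzero entry at $(7,4)$ or $(4,7)$.

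Assembling the resulting sparse matrix $\mathcal{K}_{ij}$ gives $\mathcal{K}(\mathfrak{D},\mathfrak{D}) = 2\mathfrak{l}_1^2 - 2\mathfrak{l}_3^2$, as claimed. The main obstacle is not conceptual but purely clerical: in principle one must record all twenty-one independent matrix products and verify their traces. The sparsity observations above collapse the bulk of this bookkeeping, reducing the verification to a handful of immediate inspections dictated by the block structure of the commutator table.
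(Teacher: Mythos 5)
Your proposal is correct and follows essentially the same route as the paper: both invoke the definition $\mathcal{K}(\mathfrak{D},\mathfrak{D})=\operatorname{tr}(\operatorname{ad}(\mathfrak{D})\circ\operatorname{ad}(\mathfrak{D}))$ and reduce the claim to a trace computation read off from the commutator table, the paper by writing the full $7\times 7$ matrix of $\operatorname{ad}(\mathfrak{D})$ for a general element and you by expanding bilinearly into the pairings $\operatorname{tr}(M_iM_j)$. Your sparsity observations (diagonal $M_1$, the $(4,7)$--$(7,4)$ support of $M_3$, strict lower-triangularity of the rest) correctly reproduce the only surviving contributions $2\mathfrak{l}_1^2$ and $-2\mathfrak{l}_3^2$, matching the trace of the square of the paper's displayed adjoint matrix.
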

\begin{proof}
The Killing form associated with the Lie-algebra $L^{7}$ is defined explicitly as follows, following the definition used in \cite{sharma2023invariance}:
$$\mathcal{K}(\mathfrak{D},\mathfrak{D})=Trace(\operatorname{ad(\mathfrak{D})} \circ \operatorname{ad(\mathfrak{D})})\text{,} $$
\end{proof}
where
$$\begin{aligned}
& \operatorname{Ad}(\mathfrak{D})=\left[\begin{array}{ccccccc}
0 & 0 & 0 & 0 & 0 & 0 & 0\\
-\mathfrak{l}_{2} & \mathfrak{l}_{1} & 0 & 0 & 0 & 0 & 0\\
0 & 0 & 0 & 0 & 0 & 0 & 0\\
-\frac{\mathfrak{l}_{4}}{2} & 0 & -\mathfrak{l}_{7} &  \frac{\mathfrak{l}_{1}}{2} & 0 & 0 & \mathfrak{l}_{3} \\
\frac{\mathfrak{l}_{5}}{2} & 0 & 0 & 0 & -\frac{\mathfrak{l}_{1}}{2} & 0 & 0 \\
-\frac{\mathfrak{l}_{6}}{2} & \mathfrak{l}_{5} & 0 & 0 & -\mathfrak{l}_{2} & \frac{\mathfrak{l}_{1}}{2} & 0 \\
-\frac{\mathfrak{l}_{7}}{2} & 0 & \mathfrak{l}_{4} &  -\mathfrak{l}_{3} & 0 & 0 &\frac{\mathfrak{l}_{1}}{2}  \\
\end{array}\right]\text{.}
\end{aligned} $$ 
\subsection{Construction of Adjoint Transformation Matrix}
\quad \quad It is now essential to construct the general adjoint transformation matrix $\mathcal{A}$, which comprises individual matrices of the adjoint actions $\mathfrak{R}_{1}, \mathfrak{R}_{2}, \ldots \mathfrak{R}_{7}$ concerning $\mathfrak{D}_{1}, \mathfrak{D}_{2}, \ldots \mathfrak{D}_{7}$ to $\mathcal{A}$. Let $\varepsilon_{i}\text{,}  \hspace{0.1cm}i=1,2,3,4,5,6,7$, be real constants and $g=e^{\varepsilon_{i} \mathfrak{D}_{i}}$, then we get
\begin{alignat*}{2}
\mathfrak{R}_{1}= &
\begin{pmatrix}
1 & 0 & 0 & 0 & 0 & 0 & 0 \\
0 & e^{\varepsilon_{1}} & 0 & 0 & 0 & 0 & 0\\
0 & 0 & 1 & 0 & 0 & 0 & 0\\
0 & 0 & 0 &  {e^\frac{\varepsilon_{1}}{2}} & 0 & 0 & 0\\
0 & 0 & 0 & 0 & {e^\frac{-\varepsilon_{1}}{2}} & 0 & 0\\
0 & 0 & 0 & 0 & 0 & {e^\frac{\varepsilon_{1}}{2}} & 0 \\
0 & 0 & 0 & 0 & 0 & 0 & {e^\frac{\varepsilon_{1}}{2}} \\
\end{pmatrix}\text{,}  \qquad&
\mathfrak{R}_{2}= & 
\begin{pmatrix}
1 & -\varepsilon_{2} & 0 & 0 & 0 & 0 & 0\\
0 & 1 & 0 & 0 & 0 & 0 & 0\\
0 & 0 & 1 & 0 & 0 & 0 & 0\\
0 & 0 & 0 & 1 & 0 & 0 & 0\\
0 & 0 & 0 & 0 & 1 & -\varepsilon_{2} & 0\\
0 & 0 & 0 & 0 & 0 & 1 & 0\\
0 & 0 & 0 & 0 & 0 & 0 & 1\\
\end{pmatrix}\text{,} 
\end{alignat*}
\begin{alignat*}{3}
\mathfrak{R}_{3}= &
\begin{pmatrix}
1 & 0 & 0 & 0 & 0 & 0 & 0\\
0 & 1 & 0 & 0 & 0 & 0 & 0\\
0 & 0 & 1 & 0 & 0 & 0 & 0\\
0 & 0 & 0 & \cos{\varepsilon_{3}} & 0 & 0 & -\sin{\varepsilon_{3}} \\
0 & 0 & 0 & 0 & 1 & 0 & 0\\
0 & 0 & 0 & 0 & 0 & 1 & 0\\
0 & 0 & 0 & \sin{\varepsilon_{3}} & 0 & 0 &\cos{\varepsilon_{3}}
\end{pmatrix}\text{,}  \qquad&
\mathfrak{R}_{4}= &
\begin{pmatrix}
1 & 0 & 0 & {\frac{-\varepsilon_{4}}{2}} & 0 & 0 & 0\\
0 & 1 & 0 & 0 & 0 & 0 & 0\\
0 & 0 & 1 & 0 & 0 & 0 & \varepsilon_{4}\\
0 & 0 & 0 & 1 & 0 & 0 & 0\\
0 & 0 & 0 & 0 & 1 & 0 & 0\\
0 & 0 & 0 & 0 & 0 & 1 & 0\\
0 & 0 & 0 & 0 & 0 & 0 & 1\\
\end{pmatrix}\text{,} 
\end{alignat*}
\begin{alignat*}{3}
\mathfrak{R}_{5}= &
\begin{pmatrix}
1 & 0 & 0 & 0 & {\frac{\varepsilon_{5}}{2}} & 0 & 0\\
0 & 1 & 0 & 0 & 0 & \varepsilon_{5} & 0 \\
0 & 0 & 1 & 0 & 0 & 0 & 0\\
0 & 0 & 0 & 1 & 0 & 0 & 0\\
0 & 0 & 0 & 0 & 1 & 0 & 0\\
0 & 0 & 0 & 0 & 0 & 1 & 0\\
0 & 0 & 0 & 0 & 0 & 0 & 1\\
\end{pmatrix}\text{,}  \qquad&
\mathfrak{R}_{6}= &
\begin{pmatrix}
1 & 0 & 0 & 0 & 0 & {\frac{-\varepsilon_{6}}{2}} & 0\\
0 & 1 & 0 & 0 & 0 & 0 & 0\\
0 & 0 & 1 & 0 & 0 & 0 & 0\\
0 & 0 & 0 & 1 & 0 & 0 & 0\\
0 & 0 & 0 & 0 & 1 & 0 & 0\\
0 & 0 & 0 & 0 & 0 & 1 & 0\\
0 & 0 & 0 & 0 & 0 & 0 & 1\\
\end{pmatrix}\text{,}  \qquad&
\mathfrak{R}_{7}= &
\begin{pmatrix}
1 & 0 & 0 & 0 & 0 & 0 & {\frac{-\varepsilon_{7}}{2}} \\
0 & 1 & 0 & 0 & 0 & 0 & 0 \\
0 & 0 & 1 & -\varepsilon_{7} & 0 & 0 & 0\\
0 & 0 & 0 & 1 & 0 & 0 & 0\\
0 & 0 & 0 & 0 & 1 & 0 & 0\\
0 & 0 & 0 & 0 & 0 & 1 & 0\\
0 & 0 & 0 & 0 & 0 & 0 & 1\\
 \end{pmatrix}\text{.} 
\end{alignat*}
Therefore, the global adjoint transformation matrix $\mathcal{A}=\mathfrak{R}_{7}\mathfrak{R}_{6}\mathfrak{R}_{5}\mathfrak{R}_{4}\mathfrak{R}_{3}\mathfrak{R}_{2}\mathfrak{R}_{1}$ is generated as:
$$
\mathcal{A}=\left[\begin{array}{ccccccc}
1 & -\varepsilon_{2} e^{\varepsilon_{1}} & 0 & (\frac{-\varepsilon_{4} cos\varepsilon_{3}-\varepsilon_{7} sin \varepsilon_{3}}{2}) e^\frac{\varepsilon_{1}}{2} & (\frac{\varepsilon_{5}}{2}) e^\frac{-\varepsilon_{1}}{2} & (\frac{-\varepsilon_{5}\varepsilon_{2}-\varepsilon_{6}}{2}) e^\frac{\varepsilon_{1}}{2} & (\frac{\varepsilon_{4} sin\varepsilon_{3}-\varepsilon_{7} cos \varepsilon_{3}}{2}) e^\frac{\varepsilon_{1}}{2} \\
0 & e^{\varepsilon_{1}} & 0 & 0 & 0 & \varepsilon_{5} e^\frac{\varepsilon_{1}}{2} & 0 \\
0 & 0 & 1 & (-\varepsilon_{7} cos\varepsilon_{3}+\varepsilon_{4} sin \varepsilon_{3}) e^\frac{\varepsilon_{1}}{2} & 0 & 0 & (\varepsilon_{4} cos\varepsilon_{3}+\varepsilon_{7} sin \varepsilon_{3}) e^\frac{\varepsilon_{1}}{2} \\
0 & 0 & 0 & \cos{\varepsilon_{3}} \hspace{0.1cm} 
 e^\frac{\varepsilon_{1}}{2} & 0 & 0 & -\sin{\varepsilon_{3}}\hspace{0.1cm}  e^\frac{\varepsilon_{1}}{2}  \\
0 & 0 & 0 & 0 & e^\frac{-\varepsilon_{1}}{2} & -\varepsilon_{2} e^\frac{\varepsilon_{1}}{2} & 0 \\
0 & 0 & 0 & 0 & 0 & e^\frac{\varepsilon_{1}}{2} & 0 \\
0 & 0 & 0 & sin\varepsilon_{3}\hspace{0.1cm} 
 e^\frac{\varepsilon_{1}}{2} & 0 & 0 & cos\varepsilon_{3} \hspace{0.1cm} e^\frac{\varepsilon_{1}}{2} \\
\end{array}\right]\text{.} 
$$
\renewcommand{\arraystretch}{1.4}
\begin{table}[h]
    \centering
    \begin{tabular}{l|lllllll}
    \hline
        $\operatorname{Ad}(\exp{\varepsilon(\star)}\star)$ & $\mathfrak{D}_1$ &$ \mathfrak{D}_2$ & $\mathfrak{D}_3$ & $\mathfrak{D}_4$ & $\mathfrak{D}_5$ & $\mathfrak{D}_6$ & $\mathfrak{D}_7$  \\ \hline
        $\mathfrak{D}_1$ & $\mathfrak{D}_1$ & $e^{\varepsilon}\mathfrak{D}_2$ & $\mathfrak{D}_3$ & $e^\frac{\varepsilon}{2} \mathfrak{D}_4$ & $e^\frac{-\varepsilon}{2} \mathfrak{D}_5$ & $e^\frac{\varepsilon}{2} \mathfrak{D}_6$ & $e^\frac{\varepsilon}{2} \mathfrak{D}_7$ \\ 
    
        $\mathfrak{D}_2$ & $\mathfrak{D}_1-\varepsilon \mathfrak{D}_2$ & $\mathfrak{D}_2$ & $\mathfrak{D}_3$ & $\mathfrak{D}_4$ & $\mathfrak{D}_5-\varepsilon \mathfrak{D}_6$ & $\mathfrak{D}_6$ & $\mathfrak{D}_7$  \\ 
        $\mathfrak{D}_3$ & $\mathfrak{D}_1$ & $\mathfrak{D}_2$ & $\mathfrak{D}_3$ & $\mathfrak{D}_4 \cos{\varepsilon}- \mathfrak{D}_7 \sin{\varepsilon}$ & $\mathfrak{D}_5$ & $\mathfrak{D}_6$ & $\mathfrak{D}_4 \sin{\varepsilon}+ \mathfrak{D}_7 \cos{\varepsilon}$ \\
        $\mathfrak{D}_4$ & $\mathfrak{D}_1-\frac{\varepsilon}{2} \mathfrak{D}_4$ & $\mathfrak{D}_2$ & $\mathfrak{D}_3+\varepsilon \mathfrak{D}_7$ & $\mathfrak{D}_4$ & $\mathfrak{D}_5$ & $\mathfrak{D}_6$ & $\mathfrak{D}_7$ \\ 
        $\mathfrak{D}_5$ & $\mathfrak{D}_1+\frac{\varepsilon}{2} \mathfrak{D}_5$ & $\mathfrak{D}_2+ \varepsilon \mathfrak{D}_6$ & $\mathfrak{D}_3$ & $\mathfrak{D}_4$ & $\mathfrak{D}_5$ & $\mathfrak{D}_6$ & $\mathfrak{D}_7$ \\ 
        $\mathfrak{D}_6$ & $\mathfrak{D}_1-\frac{\varepsilon}{2} \mathfrak{D}_6$ & $\mathfrak{D}_2$ & $\mathfrak{D}_3$ & $\mathfrak{D}_4$ & $\mathfrak{D}_5$ & $\mathfrak{D}_6$ & $\mathfrak{D}_7$ \\ 
        $\mathfrak{D}_7$ & $\mathfrak{D}_1-\frac{\varepsilon}{2} \mathfrak{D}_7$ & $\mathfrak{D}_2$ & $\mathfrak{D}_3-\varepsilon \mathfrak{D}_4$ & $\mathfrak{D}_4$ & $\mathfrak{D}_5$ & $\mathfrak{D}_6$ & $\mathfrak{D}_7$ \\ \hline
        \end{tabular}
    \caption{Adjoint Representation Table.}
\end{table}
\renewcommand{\arraystretch}{1.4}
\begin{table}[h]
    \centering
    \begin{tabular}{c|ccccccc}\hline
        $\operatorname{Ad}(\exp{(\varepsilon \mathfrak{D}_i)}\mathfrak{D})$ & $\mathfrak{D}_1$ &$\mathfrak{D}_2$ & $\mathfrak{D}_3$ & $\mathfrak{D}_4$ & $\mathfrak{D}_5$ & $\mathfrak{D}_{6}$ & $\mathfrak{D}_{7}$ \\ \hline
        $\operatorname{Ad}(\exp{(\varepsilon \mathfrak{D}_1)}\mathfrak{D})$ & $\mathfrak{l}_1$ & $e^{\varepsilon}\mathfrak{l}_2$ & $\mathfrak{l}_3$ & $e^{\frac{\varepsilon}{2}} \mathfrak{l}_4$ & $e^{\frac{-\varepsilon}{2}} \mathfrak{l}_5$ & $e^{\frac{\varepsilon}{2}} \mathfrak{l}_6$ & $e^{\frac{\varepsilon}{2}} \mathfrak{l}_7$\\ 
        $\operatorname{Ad}(\exp{(\varepsilon \mathfrak{D}_2)}\mathfrak{D})$ & $\mathfrak{l}_1$ & $\mathfrak{l}_2-\varepsilon \mathfrak{l}_1$ & $\mathfrak{l}_3$ & $\mathfrak{l}_4$ & $\mathfrak{l}_5$ & $\mathfrak{l}_6-\varepsilon \mathfrak{l}_5$ & $\mathfrak{l}_7$ \\ 
        $\operatorname{Ad}(\exp{(\varepsilon \mathfrak{D}_3)}\mathfrak{D})$ & $\mathfrak{l}_1$ & $\mathfrak{l}_2$ & $\mathfrak{l}_3$ & $\mathfrak{l}_4\cos{\varepsilon} +\mathfrak{l}_7 \sin{\varepsilon}$ & $\mathfrak{l}_5$ & $\mathfrak{l}_6$ & $\mathfrak{l}_7 \cos{\varepsilon}-\mathfrak{l}_4 \sin{\varepsilon}$ \\ 
        $\operatorname{Ad}(\exp{(\varepsilon \mathfrak{D}_4)}\mathfrak{D})$ & $\mathfrak{l}_1$ & $\mathfrak{l}_2$ & $\mathfrak{l}_3$ & $\mathfrak{l}_4 -\frac{\varepsilon \mathfrak{l}_1}{2}$ & $\mathfrak{l}_5$ & $\mathfrak{l}_6$ & $\mathfrak{l}_7+\varepsilon \mathfrak{l}_3$ \\ 
        $\operatorname{Ad}(\exp{(\varepsilon \mathfrak{D}_5)}\mathfrak{D})$ & $\mathfrak{l}_1$ & $\mathfrak{l}_2$ & $\mathfrak{l}_3$ & $\mathfrak{l}_4$ & $\mathfrak{l}_5+\frac{\varepsilon \mathfrak{l}_1}{2}$ & $\mathfrak{l}_6+\varepsilon \mathfrak{l}_2$ & $ \mathfrak{l}_7$ \\ 
        $\operatorname{Ad}(\exp{(\varepsilon \mathfrak{D}_6)}\mathfrak{D})$ & $\mathfrak{l}_1$ & $\mathfrak{l}_2$ & $\mathfrak{l}_3$ & $\mathfrak{l}_4$ & $\mathfrak{l}_5$ & $\mathfrak{l}_6-\frac{\varepsilon \mathfrak{l}_1}{2}$ & $\mathfrak{l}_7$ \\ 
        $\operatorname{Ad}(\exp{(\varepsilon \mathfrak{D}_7)}\mathfrak{D})$ & $\mathfrak{l}_1$ & $\mathfrak{l}_2$ & $\mathfrak{l}_3$ & $\mathfrak{l}_4 -\varepsilon \mathfrak{l}_3$ & $\mathfrak{l}_5$ & $\mathfrak{l}_6$ & $\mathfrak{l}_7-\frac{\varepsilon \mathfrak{l}_1}{2}$ \\ 
        \hline 
    \end{tabular}
    \caption{Table for Construction of Invariant Functions.}
    \label{t2}
\end{table}
\begin{lemma}
    Within $\mathbb{L}^{7}$, the subalgebra generated by $\mathfrak{D}_{1}$ and $\mathfrak{D}_{3}$ (denoted as $\mathcal{M}=\mathfrak{l}_{1}$ and $\mathcal{N}=\mathfrak{l}_{3}$) remains invariant under the adjoint action.
\end{lemma}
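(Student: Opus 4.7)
The plan is to verify directly that the two coordinate functions $\mathcal{M}(\mathfrak{D})=\mathfrak{l}_{1}$ and $\mathcal{N}(\mathfrak{D})=\mathfrak{l}_{3}$ on a general element $\mathfrak{D}=\sum_{i=1}^{7}\mathfrak{l}_{i}\mathfrak{D}_{i}\in\mathbb{L}^{7}$ are preserved by every one-parameter adjoint map $\operatorname{Ad}(\exp(\varepsilon\mathfrak{D}_{j}))$, $j=1,\ldots,7$. Since these one-parameter subgroups generate the inner automorphism action on $\mathbb{L}^{7}$, pointwise invariance under each of them is enough to conclude that the coefficients $\mathfrak{l}_{1}$ and $\mathfrak{l}_{3}$ are invariant under the full adjoint action, which is precisely the content of the lemma.

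The quickest route is to read off Table \ref{t2}: inspecting the first and third columns for each row $\operatorname{Ad}(\exp(\varepsilon\mathfrak{D}_{j}))$, one sees that $\mathfrak{l}_{1}\mapsto\mathfrak{l}_{1}$ and $\mathfrak{l}_{3}\mapsto\mathfrak{l}_{3}$ without any shift or mixing. For a self-contained justification I would alternatively use the commutator Table \ref{Table 1}: a direct scan shows that neither $\mathfrak{D}_{1}$ nor $\mathfrak{D}_{3}$ ever appears on the right-hand side of any bracket $[\mathfrak{D}_{i},\mathfrak{D}_{j}]$. Writing $\mathcal{H}=\sum\mathfrak{l}_{i}\mathfrak{D}_{i}$, the Lie bracket $[\mathcal{J},\mathcal{H}]$ is therefore supported on $\operatorname{span}(\mathfrak{D}_{2},\mathfrak{D}_{4},\mathfrak{D}_{5},\mathfrak{D}_{6},\mathfrak{D}_{7})$, and the same holds for every iterated bracket. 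Feeding this into the Baker–Campbell–Hausdorff series
$$\operatorname{Ad}(\exp(\varepsilon\mathcal{J}))\mathcal{H}=\mathcal{H}-\varepsilon[\mathcal{J},\mathcal{H}]+\tfrac{\varepsilon^{2}}{2!}[\mathcal{J},[\mathcal{J},\mathcal{H}]]-\cdots,$$
the $\mathfrak{D}_{1}$ and $\mathfrak{D}_{3}$ components of $\mathcal{H}$ cannot receive any correction, so $\mathfrak{l}_{1}$ and $\mathfrak{l}_{3}$ are fixed.

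This is fully consistent with the earlier invariant-function computation, which produced $\mathcal{O}(\mathfrak{l}_{1},\ldots,\mathfrak{l}_{7})=\mathfrak{M}(\mathfrak{l}_{1},\mathfrak{l}_{3})$, and with the Killing form $\mathcal{K}(\mathfrak{D},\mathfrak{D})=2(\mathfrak{l}_{1}^{2}-\mathfrak{l}_{3}^{2})$ depending only on these two coordinates. No genuine obstacle is anticipated: the main care is bookkeeping, i.e. being consistent with the sign convention in the adjoint expansion so that each $\mathfrak{R}_{j}$ is assembled in the correct order inside $\mathcal{A}=\mathfrak{R}_{7}\mathfrak{R}_{6}\cdots\mathfrak{R}_{1}$. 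Once the two coordinates are declared invariant, the abelian subalgebra $\operatorname{span}(\mathfrak{D}_{1},\mathfrak{D}_{3})$ (note $[\mathfrak{D}_{1},\mathfrak{D}_{3}]=0$ from Table \ref{Table 1}) is identified as the natural choice for parametrizing the one-dimensional optimal system built in the following subsection.
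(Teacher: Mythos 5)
Your proposal is correct. Its first half — reading off the columns for $\mathfrak{l}_{1}$ and $\mathfrak{l}_{3}$ in Table \ref{t2} and noting they are never shifted or mixed — is precisely the paper's (one-line) proof, which simply cites that table. Where you go beyond the paper is the second, structural argument: scanning the commutator Table \ref{Table 1} to see that neither $\mathfrak{D}_{1}$ nor $\mathfrak{D}_{3}$ ever occurs on the right-hand side of a bracket, so every iterated bracket lies in $\operatorname{span}(\mathfrak{D}_{2},\mathfrak{D}_{4},\mathfrak{D}_{5},\mathfrak{D}_{6},\mathfrak{D}_{7})$ and the adjoint series $\mathcal{H}-\varepsilon[\mathcal{J},\mathcal{H}]+\tfrac{\varepsilon^{2}}{2!}[\mathcal{J},[\mathcal{J},\mathcal{H}]]-\cdots$ cannot perturb the $\mathfrak{D}_{1}$ and $\mathfrak{D}_{3}$ coefficients. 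This derived-subalgebra argument is a genuine strengthening: it does not depend on the correctness of the tabulated one-parameter adjoint actions (which were themselves computed from the brackets), it covers the adjoint action of an arbitrary element $\mathcal{J}\in\mathbb{L}^{7}$ at once rather than only the seven generators, and it explains conceptually why the invariant-function computation yielded $\mathcal{O}=\mathfrak{M}(\mathfrak{l}_{1},\mathfrak{l}_{3})$ and why the Killing form depends only on these coordinates. The paper's table check buys brevity; your bracket-based argument buys self-containedness and robustness against tabulation errors.
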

\begin{proof}
The data in Table \ref{t2} demonstrate the evidence.
\end{proof}
The subsequent lemmas and their Ad-invariance properties are summarized in Table \ref{tab:invariants}.
\begin{table}[ht]
    \centering
    \renewcommand{\arraystretch}{1.1} 
    \begin{tabular}{|c|c|c|}
        \hline
        \textbf{Invariant Function} & \textbf{Definition} & \textbf{Invariance Condition} \\
        \hline
        $\mathbb{O}$ & $\mathbb{O} = 1$ if $\mathfrak{l}_1^2 + \mathfrak{l}_2^2 + \mathfrak{l}_3^2 \neq 0$, otherwise $0$ & Preserved under adjoint actions. \\
        $\mathcal{P}$ & $\mathcal{P} = 1$ if $\mathfrak{l}_1^2 + \mathfrak{l}_2^2 + \mathfrak{l}_3^2 + \mathfrak{l}_5^2 + \mathfrak{l}_6^2 \neq 0$, otherwise $0$ & Similar invariance condition as $\mathbb{O}$. \\
        $\mathcal{Q}$ & $\mathcal{Q} = \operatorname{sgn}(\mathfrak{l}_4)$ if $\mathfrak{l}_1 = \mathfrak{l}_3 = \mathfrak{l}_7 = 0$, otherwise $0$ & $\operatorname{sgn}(\mathfrak{l}_4)$ remains unchanged under adjoint actions. \\
        $\mathcal{R}$ & $\mathcal{R} = \operatorname{sgn}(\mathfrak{l}_2)$ if $\mathfrak{l}_1 = 0$, otherwise $0$ & Similar invariance condition as $\mathcal{Q}$. \\
        $\mathcal{S}$ & $\mathcal{S} = \operatorname{sgn}(\mathfrak{l}_5)$ if $\mathfrak{l}_1 = 0$, otherwise $0$ & Similar invariance condition as $\mathcal{Q}$. \\
        $\mathcal{T}$ & $\mathcal{T} = \operatorname{sgn}(\mathfrak{l}_6)$ if $\mathfrak{l}_1 = \mathfrak{l}_2 = \mathfrak{l}_5 = 0$, otherwise $0$ &  Similar invariance condition as $\mathcal{Q}$. \\
        $\mathcal{U}$ & $\mathcal{U} = \operatorname{sgn}(\mathfrak{l}_7)$ if $\mathfrak{l}_1 = \mathfrak{l}_3 = \mathfrak{l}_4 = 0$, otherwise $0$ &  Similar invariance condition as $\mathcal{Q}$. \\
         \hline
    \end{tabular}
    \caption{Invariant Functions and Their Conditions.}
    \label{tab:invariants}
\end{table}
\subsection{Construction of Optimal System}
\quad \quad The subalgebras listed below categorize the one-parameter optimal system for $\mathbb{L}^7$.
\subsection{Case 1.} 
$\begin{aligned}
(i) \hspace{0.2em}\Lambda_{1}=\left\{\mathfrak{l}_{1}=0,
\mathfrak{l}_{3}=1\right\}\text{,} \quad (ii) \hspace{0.2em}\Lambda_{2}=\left\{\mathfrak{l}_{1}=1, \mathfrak{l}_{3}=0\right\} \text{,}\quad (iii) \hspace{0.2em} \Lambda_{3}=\left\{\mathfrak{l}_{1}=0, \mathfrak{l}_{3}=0\right\}\text{.}
\end{aligned}$
\subsection{Case 2. \texorpdfstring{$k \neq 0$}{k ≠ 0}, 
\texorpdfstring{$\Lambda_{4}$}{Lambda4} = \texorpdfstring{$\left\{ \mathfrak{l}_{1}=1, \mathfrak{l}_{3}=k \right\}$}{set notation}}
To create the ideal system of subalgebras, we apply the technique from \cite{hu2015direct}. If at least one $\varepsilon_{i}$, i= 1,…,5 can be solved from the following adjoint transformation equation, then the symbolic elements $\sum_{j=1}^{7} \mathfrak{l}_{i} \mathfrak{D}_{i}$ and $\sum_{j=1}^{7} \mathfrak{l}_{i}^{*}\mathfrak{D}_{j}$ are equivalent, which should be known before examining the aforementioned cases:
$$(\mathfrak{l}_{1}^{*},\mathfrak{l}_{2}^{*},\mathfrak{l}_{3}^{*},\mathfrak{l}_{4}^{*},\mathfrak{l}_{5}^{*},\mathfrak{l}_{6}^{*},\mathfrak{l}_{7}^{*})=(\mathfrak{l}_{1},\mathfrak{l}_{2},\mathfrak{l}_{3},\mathfrak{l}_{4},\mathfrak{l}_{5},\mathfrak{l}_{6},\mathfrak{l}_{7}). \mathcal{A}$$
An ideal system would consist of the symbolic components of the collection of equations:
\begin{equation}\label{1111}
    \begin{split}
     &\mathfrak{l}_{1}^{*}=\mathfrak{l}_{1}, \hspace{0.2em} \mathfrak{l}_{2}^{*}=(-\mathfrak{l}_{1} \varepsilon_{2} +\mathfrak{l}_{2})e^{\varepsilon_{1}}, \hspace{0.2em} \mathfrak{l}_{3}^{*}=\mathfrak{l}_{3}, \hspace{0.2em} \\& \mathfrak{l}_{4}^{*}=[-(\varepsilon_{4}cos\varepsilon_{3}+\varepsilon_{7}sin\varepsilon_{3})\frac{\mathfrak{l}_{1}}{2}+ (-\varepsilon_{7}cos\varepsilon_{3}+\varepsilon_{4}sin\varepsilon_{3})\mathfrak{l}_{3}+\mathfrak{l}_{4}cos\varepsilon_{3}+
    \mathfrak{l}_{7}sin\varepsilon_{3}] e^\frac{\varepsilon_1}{2}, \hspace{0.2em} \mathfrak{l}_{5}^{*}=(\frac{\mathfrak{l}_{1}\varepsilon_{5}}{2}+\mathfrak{l}_{5}) e^\frac{-\varepsilon_{1}}{2}, \hspace{0.2em}\\
     &\mathfrak{l}_{6}^{*}=[\frac{-\mathfrak{l}_{1}}{2}(\varepsilon_{5}\varepsilon_{2}+\varepsilon_{6})+\mathfrak{l}_{2}\varepsilon_{5}-\mathfrak{l}_{5}\varepsilon_{2}+\mathfrak{l}_{6}] e^\frac{\varepsilon_{1}}{2},\hspace{0.2em} \\
     &\mathfrak{l}_{7}^{*}=[\frac{\mathfrak{l}_{1}}{2}(\varepsilon_{4}sin\varepsilon_{3}-\varepsilon_{7}cos\varepsilon_{3})+\mathfrak{l}_{3}(\varepsilon_{7}sin\varepsilon_{3}+\varepsilon_{4}cos\varepsilon_{3})+\mathfrak{l}_{4}sin\varepsilon_{3}+\mathfrak{l}_{7}cos\varepsilon_{3}]e^\frac{\varepsilon_{1}}{2},\end{split}
\end{equation}
     has a solution with respect to $\varepsilon_{i}$, for i=1,2,3,4,5,6,7.
   Now for $\Lambda_{1}$: $\mathfrak{l}_{1}=0$ , $\mathfrak{l}_{3}=1$, the representative element is $
\hat{\mathcal{X}}=\mathfrak{l}_{2} \mathfrak{D}_{2}+\mathfrak{D}_{3}+\mathfrak{l}_{4} \mathfrak{D}_{4}+\mathfrak{l}_{5} \mathfrak{D}_{5}+\mathfrak{l}_{6} \mathfrak{D}_{6}+\mathfrak{l}_{7} \mathfrak{D}_{7}\text{,}
$. The adjoint actions of $\mathfrak{D}_{4}$ and $\mathfrak{D}_{7}$ allow us to omit the coefficients of $\mathfrak{D}_{4}$ and $\mathfrak{D}_{7}$, yielding $\hat{\mathcal{X}}=b_{2} \mathfrak{D}_{2}+\mathfrak{D}_{3}+b_{5} \mathfrak{D}_{5}+b_{6} \mathfrak{D}_{6}\text{.}$ The solutions to the system of Eqs. \eqref{1111}, as $\varepsilon_{4}=-\mathfrak{l}_{7}\text{,} \hspace{0.2cm} \varepsilon_{7}=\mathfrak{l}_{4}\text{.}$
The same process may be used to create a one-dimensional optimal subalgebra for the remaining situations, and the results are expressed as:
\begin{thm}
The vector fields generate the optimal one-dimensional subalgebra of $\mathbb{L}^7$ of \eqref{11}.
$$
\begin{aligned}
&\mathcal{X}_{1,b_2,b_5,b_6}=b_2 \mathfrak{D}_{2}+\mathfrak{D}_{3}+b_5 \mathfrak{D}_{5}+b_6 \mathfrak{D}_{6}\text{,}\quad
\mathcal{X}_{2}=\mathfrak{D}_{1}\text{,}\quad
\mathcal{X}_{3, d_2, d_4, d_5, d_6, d_7}=d_{2} \mathfrak{D}_{2}+d_{4} \mathfrak{D}_{4}+d{5} \mathfrak{D}_{5}+d{6} \mathfrak{D}_{6}+d{7} \mathfrak{D}_{7}\text{,}\quad \\
&\mathcal{X}_{4,k}=\mathfrak{D}_{1}+k \mathfrak{D}_{3}\text{,}
\end{aligned}
$$
where $b_2,b_5,b_6$ are parameters with $b_2,b_5,b_6 \in \{-1,0,1\}$. Also, $k$ is to be determined according to earlier mentioned in case $1$, and $d{j}^{'}s$ are arbitrary constants $\forall \hspace{0.1cm} j=2,4,5,6,7 \text{.}$
\end{thm}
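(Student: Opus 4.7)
The plan is to partition $\mathbb{L}^{7}$ into orbits of the adjoint group by first fixing the values of the two adjoint-invariant coordinates $\mathfrak{l}_1$ and $\mathfrak{l}_3$ coming from the invariant $\mathfrak{M}(\mathfrak{l}_1,\mathfrak{l}_3)$ derived in Section~\ref{4}. Because a simultaneous rescaling of the representative is always available, the pair $(\mathfrak{l}_1,\mathfrak{l}_3)$ can always be normalized to one of $(0,0)$, $(0,1)$, $(1,0)$, or $(1,k)$ with $k\neq 0$, giving the four cases $\Lambda_3, \Lambda_1, \Lambda_2, \Lambda_4$ that label the four representative families $\mathcal{X}_3, \mathcal{X}_1, \mathcal{X}_2, \mathcal{X}_4$ in the theorem. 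Exhaustiveness of the classification then follows immediately from this normalization.

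Within each case I would substitute the chosen values of $\mathfrak{l}_1, \mathfrak{l}_3$ into the explicit global adjoint relations \eqref{1111} and solve for the continuous parameters $\varepsilon_1,\ldots,\varepsilon_7$ that annihilate as many of the remaining coefficients as possible. The $\Lambda_1$ case (written out in the excerpt) uses $\varepsilon_4=-\mathfrak{l}_7$ and $\varepsilon_7=\mathfrak{l}_4$ to kill $\mathfrak{l}_4^{*}$ and $\mathfrak{l}_7^{*}$; after this reduction the surviving coefficients of $\mathfrak{D}_2, \mathfrak{D}_5, \mathfrak{D}_6$ can only be rescaled but their signs are frozen by the invariants $\mathcal{R}, \mathcal{S}, \mathcal{T}$ of Table~\ref{tab:invariants}, hence $b_2, b_5, b_6 \in \{-1,0,1\}$. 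For $\Lambda_2$ ($\mathfrak{l}_1=1$), each of the relations for $\mathfrak{l}_2^{*}, \mathfrak{l}_4^{*}, \mathfrak{l}_5^{*}, \mathfrak{l}_6^{*}, \mathfrak{l}_7^{*}$ in \eqref{1111} contains a linear $\mathfrak{l}_1$-term in some $\varepsilon_i$, so each coefficient can be killed in succession, leaving only $\mathfrak{D}_1$. Case $\Lambda_4$ is analogous, but $\mathfrak{l}_3=k$ is strictly preserved since $\mathfrak{l}_3^{*}=\mathfrak{l}_3$, yielding $\mathfrak{D}_1 + k\mathfrak{D}_3$. The degenerate case $\Lambda_3$ ($\mathfrak{l}_1=\mathfrak{l}_3=0$) eliminates every $\mathfrak{l}_1$- and $\mathfrak{l}_3$-multiplied term in \eqref{1111}, so the adjoint action reduces to pure scaling and a few innocuous shifts that cannot eliminate further generators; the generic element $d_2\mathfrak{D}_2+d_4\mathfrak{D}_4+d_5\mathfrak{D}_5+d_6\mathfrak{D}_6+d_7\mathfrak{D}_7$ therefore survives as a five-parameter family.

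The main obstacle will be establishing intra-case optimality and mutual inequivalence across the four families. Inter-case inequivalence is immediate from the adjoint-invariance of $(\mathfrak{l}_1,\mathfrak{l}_3)$. Intra-case optimality demands a stabilizer argument: for the residual adjoint transformations that fix the normalized pair $(\mathfrak{l}_1,\mathfrak{l}_3)$, one must show that no further admissible choice of $\varepsilon_i$ simplifies the representative. This amounts to checking that the linear system obtained from \eqref{1111} restricted to the stabilizer has full rank on the surviving coefficients (with only the discrete sign freedom encoded by $\mathcal{R},\mathcal{S},\mathcal{T}$ remaining in the $\Lambda_1$ case). Once these two points are verified, collating the four representatives $\mathcal{X}_{1,b_2,b_5,b_6}, \mathcal{X}_2, \mathcal{X}_{3,d_2,d_4,d_5,d_6,d_7}, \mathcal{X}_{4,k}$ with the stated parameter ranges exhibits the optimal one-dimensional system and concludes the proof.
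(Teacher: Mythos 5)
Your overall route coincides with the paper's: fix the adjoint invariants $(\mathfrak{l}_1,\mathfrak{l}_3)$ to get the four cases $\Lambda_1,\ldots,\Lambda_4$, use the global adjoint relations \eqref{1111} to annihilate coefficients (e.g.\ $\varepsilon_4=-\mathfrak{l}_7$, $\varepsilon_7=\mathfrak{l}_4$ in $\Lambda_1$), and separate the resulting representatives by invariant functions. Be aware, however, that the paper's stated proof of the theorem is only the last of these steps: it verifies via the table of invariant values (Table \ref{t4}) that the listed $\mathcal{X}_i$ are mutually inequivalent; the normalization itself is carried out before the theorem and no intra-case completeness or minimality argument is given there. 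So the distinctness part of your plan matches the paper, while the stabilizer/full-rank verification you defer is additional to anything the paper actually does.

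There is, though, a concrete flaw in your $\Lambda_1$ argument. With $\mathfrak{l}_1=0$, $\mathfrak{l}_3=1$, the residual adjoint freedom after killing $\mathfrak{l}_4$ and $\mathfrak{l}_7$ is not pure rescaling: from \eqref{1111}, $\mathfrak{l}_6^{*}=\left(\mathfrak{l}_2\varepsilon_5-\mathfrak{l}_5\varepsilon_2+\mathfrak{l}_6\right)e^{\varepsilon_1/2}$, so whenever $b_2\neq 0$ or $b_5\neq 0$ the coefficient of $\mathfrak{D}_6$ can still be shifted (indeed annihilated) by a suitable $\varepsilon_5$ or $\varepsilon_2$, while $\mathfrak{l}_2$ and $\mathfrak{l}_5$ are unaffected. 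Likewise, the invariant $\mathcal{T}=\operatorname{sgn}(\mathfrak{l}_6)$ of Table \ref{tab:invariants} is defined only when $\mathfrak{l}_1=\mathfrak{l}_2=\mathfrak{l}_5=0$, so it cannot "freeze" the sign of $b_6$ in general; only $\mathcal{R}$ and $\mathcal{S}$ apply throughout $\Lambda_1$. Finally, the only scaling compatible with keeping $\mathfrak{l}_3=1$ is the one-parameter dilation $\varepsilon_1$, which rescales $\mathfrak{l}_2,\mathfrak{l}_5,\mathfrak{l}_6$ simultaneously by $e^{\varepsilon_1},e^{-\varepsilon_1/2},e^{\varepsilon_1/2}$; a single parameter cannot normalize three independent magnitudes to $\{-1,0,1\}$ (for instance $\mathfrak{l}_2\mathfrak{l}_5^{2}$ is unchanged by it). Hence your justification of the range $b_2,b_5,b_6\in\{-1,0,1\}$ does not go through as written, and the deferred stabilizer computation is exactly where the real work lies; note that the paper itself does not supply this step either, proving only that its listed representatives are pairwise distinct.
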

\begin{proof}
It is enough to prove the theorem by illustrating that the $\mathcal{X}_i^{'} s$ are distinct from one another. For this, we construct a table demonstrating the invariants in Table \ref{t4}. Analysis of the contents of Table \ref{t4} shows that the subalgebras are distinct.
\renewcommand{\arraystretch}{1.0}
\begin{table}[H]
    \centering
    \begin{tabular}{|c|cccccccccc|}
    \hline
      &$ \mathcal{K}$ & $\mathcal{M}$ & $\mathcal{N}$ & $\mathcal{O}$ & $\mathcal{P}$ & $\mathcal{Q}$ & $\mathcal{R}$ & $\mathcal{S}$ & $\mathcal{T}$ & $\mathcal{U}$\\\hline
       
       $\mathcal{X}_{1,1,0,1}(\mathfrak{D}_2+\mathfrak{D}_3+\mathfrak{D}_6)$  & $-2$ & 0 & 1 & 1 & 1 & 0 & $1$ & $0$ &  0 & 0\\
        $\mathcal{X}_{1,0,0,0}(\mathfrak{D}_3)$  & $-2$ & 0 & 1 & 1 & 1 & 0 & $0$ & $0$ &  $0$ & 0\\
        $\mathcal{X}_{2}(\mathfrak{D}_1)$ & 2  & 1 & 0 & 1 & 1 & 0 & 0 & 0 & 0 & 0\\
       $\mathcal{X}_{3, 0, d_4, d_5, d_6, d_7; d_4, d_5, d_6, d_7 \neq 0}(d_4 \mathfrak{D}_4+ d_5 \mathfrak{D}_5+ d_6 \mathfrak{D}_6 + d_7 \mathfrak{D}_7)$  & 0 & 0 & 0 & 0 & 1 & 0 & $0$ & $d_5$ & 0 & 0\\
      $\mathcal{X}_{3, d_2, 0, d_5, d_6, d_7; d_2, d_5, d_6, d_7 \neq 0}(d_2 \mathfrak{D}_2+ d_5 \mathfrak{D}_5+ d_6 \mathfrak{D}_6 + d_7 \mathfrak{D}_7)$  & 0 & 0 & 0 & 1 & 1 & 0 & $d_2$ & $d_5$ & 0 & $d_7$\\
        $\mathcal{X}_{3, 0, d_4, 0, d_6, d_7; d_4,  d_6, d_7 \neq 0}(d_4 \mathfrak{D}_4+  d_6 \mathfrak{D}_6 + d_7 \mathfrak{D}_7)$  & 0 & 0 & 0 & 0 & 1 & 0 & $0$ & $0$ & $d_6$ & 0\\ 
        $\mathcal{X}_{3, d_2, 0, d_5, d_6, 0; d_2,  d_5, d_6 \neq 0}(d_2 \mathfrak{D}_2+  d_5 \mathfrak{D}_5 + d_6 \mathfrak{D}_6)$  & 0 & 0 & 0 & 1 & 1 & $0$ & $d_2$ & $d_5$ & 0 & $0$\\ 
        $\mathcal{X}_{3, 0, 0, 0, d_6, d_7; d_6, d_7 \neq 0}(d_6 \mathfrak{D}_6 + d_7 \mathfrak{D}_7)$  & 0 & 0 & 0 & 0 & 1 & 0 & $0$ & $0$ & $d_6$ & $d_7$\\
        $\mathcal{X}_{4,1}(\mathfrak{D}_1+\mathfrak{D}_3)$  &  0 & 1 & 1 & 1 & 1 & 0 & 0 & 0 & 0 & 0\\
        $\mathcal{X}_{4,-1}(\mathfrak{D}_1-\mathfrak{D}_3)$ & 0 & 1 & $-1$ & 1 & 1 & 0 & 0 & 0 & 0 & 0\\\hline
    \end{tabular}
    \caption{Value of Invariant Function.}
    \label{t4}
\end{table}
\end{proof}
\section{Symmetry Reduction and Group Invariant Solutions for ZK Equation} \label{5}
\quad \quad Constructing a one-dimensional optimal system simplifies finding group-invariant solutions by leveraging key symmetries, aiding classification, and revealing deeper insights into the equation's structure. 
\subsection{Invariant Solution Using Optimal System}
\subsubsection{\texorpdfstring{$\mathcal{X}=b_{2} \mathfrak{D}_{2}+\mathfrak{D}_{3}+b_{5} \mathfrak{D}_{5}+b_6 \mathfrak{D}_{6}$ }{X = b2 {D}2 + {D}3 +b5{D}5 + b6 {D}6}} \label{S1}
\begin{equation} \label{a}
P=\frac{2xb_2-b_5 t^2-2b_6t}{2b_2}\text{,}\quad
Q=y \sin \left(\frac{t}{b_2}\right)+z \cos \left(\frac{t}{b_2}\right)\text{,} \quad
R=y \cos \left(\frac{t}{b_2}\right)-z \sin \left(\frac{t}{b_2}\right)\text{,}\quad
\text{and}\hspace{0.2cm} u=\frac{b_5 t}{ab_2}+F(P,Q,R)\text{.}
\end{equation}
Here, $P$, $Q$, and $R$  are the similarity variables,  and $F(P, Q, R)$ is the similarity function.
Therefore, Eq. \eqref{11} is converted into
\begin{equation}\label{s}
a(aF{}b_2-b_6)F_{P}-aPF_{R}+aRF_{Q}+ab_2(F_{P P}+F_{Q Q}+F_{R R})+b_5=0\text{.}
\end{equation}
To reduce Eq. \eqref{s}, the listed infinitesimal generators using Lie symmetry are:
\begin{equation} \label{524}
\xi_{P}=0  ,\quad  \xi_{Q}=A_{1}  , \quad \xi_{R}=0  , \quad \xi_{F}=0\text{,}
\end{equation}
where $A_{1}$ is an arbitrary constant,
which gives $F(P,Q,R)=G(r,s )$ with $P=r $, $R=s$ and
$G(r,s)$ is a similarity function of $r$ and $s$.\\
Thus, the second reduction of the model \eqref{11} is:
\begin{equation}\label{1}
a(aG{}b_2-b_6)G_{r}-arG_{s}+ab_2(G_{r r}+G_{s s})+b_5=0\text{.}
\end{equation}
To reduce Eq. \eqref{1}, the listed infinitesimal generators using Lie symmetry are:
\begin{equation} \label{527}
\xi_{r}=0, \quad \xi_{s}=B_{1}, \quad  \xi_{G}=0\text{,} 
\end{equation}
where $B_{1}$ is the arbitrary constant,
which gives $G(r, s)=H(\lambda)$ with $\lambda=r$, and $H(\lambda)$ is a similarity function of $\lambda$.\\
Thus, the third reduction of the model \eqref{11} is:
\begin{equation}\label{5.21}
ab_2H^{\prime \prime}+a(aH{}b_2-b_6)H^{\prime}+b_5=0\text{.}
\end{equation}
The solution to the above ODE for $a=1, b_2=1, b_5=0$ and $b_6=1$ is given by
\begin{equation}
    H(\lambda)=\frac{2}{1+2C e^{-\lambda}}\text{.}
\end{equation}
So, the invariant solution for the model equation. \eqref{11} is as follows: 
\begin{equation}\label{535}
u_1(x,y,z,t)  = \frac{2}{1+2C e^{t-x}}\text{.} \hspace{1cm} 
\end{equation}
\subsubsection{\texorpdfstring{$\mathcal{X}=\mathcal{D}_{1}$}{X = X1}}\label{S2}
\begin{table}[H]
    \centering
    \begin{tabular}{>{\centering\arraybackslash}m{1.5cm}>{\centering\arraybackslash}m{4cm} >{\centering\arraybackslash}m{9.5cm}}
    \toprule
 \textbf{Reduction} &\textbf{Similarity variables} & \textbf{Reduced equations} \\
    \midrule
     
    1.&$P=\frac{x}{\sqrt{t}}$, \;$Q=\frac{y}{\sqrt{t}}$\text{,}\newline $R=\frac{z}{\sqrt{t}}$, \; $u =\frac{F(P,Q,R)}{\sqrt{t}}$&
    $2(F_{P P}+F_{Q Q}+F_{R R}) +2aF{}F_{P}-F{}-(P F_{P}+Q F_{Q}+R F_{R})=0$ \\
    \midrule
    2.&$r = P, \; s = Q^2+R^2$, \newline  $F =G(r, s)$& 
    $aG_{}G_{r} + G_{rr} +4G_s +4sG_{ss}=0$ \\
    
     \bottomrule
    \label{T3}
    \end{tabular}
\end{table}
The solution to the above PDE is given by
\begin{equation}
    G(r,s)=\frac{r}{a}.
\end{equation}
So, the invariant solution for the model equation. \eqref{11} is as follows: 
\begin{equation}\label{542}
u_2(x,y,z,t)  =\frac{x}{a\sqrt{t}}\text{.}
\end{equation}
\subsubsection{\texorpdfstring{$\mathcal{X}=d_{2} \mathcal{D}_{2}+d_{4} \mathcal{D}_{4}+d_{5} \mathcal{D}_{5}+d_{6} \mathcal{D}_{6}+d_{7} \mathcal{D}_{7}$}{X = Î±2 X2 + Î±4 X4 + Î±5 X5}}\label{S3}
\begin{table}[H]
    \centering
    \begin{tabular}{>{\centering\arraybackslash}m{1.5cm}>{\centering\arraybackslash}m{4cm} >{\centering\arraybackslash}m{9.9cm}}
    \toprule
 \textbf{Reduction} &\textbf{Similarity variables} & \textbf{Reduced equations} \\
    \midrule
     
    1.&$P=\frac{2xd_2-d_5 t^2-2d_6t}{2d_2}$,\quad $Q=y-\frac{d_4 t}{d_2}$\text{,} \quad $R=z-\frac{d_7 t}{d_2}$, \; $u =\frac{d_5 t}{ad_2}+F(P,Q,R)$&
    $a(aF{}d_2-d_6)F_{P}-ad_7F_{R}-ad_4F_{Q}+ad_2(F_{P P}+F_{Q Q}+F_{R R})+d_5=0$ \\
    \midrule
    2.&$r = b_1P-b_2Q$, \; $s = b_3Q-b_1R $ ,\newline  $F =G(r, s)$&  
    $a(ab_1d_2G{}-b_2d_4)G_{r}+ ad_2(b_1^2+b_2^2)G_{r r}+ ad_2(b_1^2+b_3^2)G_{s s}+ a(b_1d_7+b_3d_4)G_{s}- 2ad_2b_2b_3G_{r s}+ d_5=0$  \\
     \midrule
    3.&$\lambda=A_2 r- A_1 s$,\newline  $G =H(\lambda)$&  $a((b_1^2+b_2^2)A_2^2+2A_1A_2b_2b_3+A_1^2(b_1^2+b_3^2))d_2 H^{\prime \prime} +a(aH{}A_2b_1d_2+-b_2d_4A_2+A_1(-b_1d_7-b_3d_4)) H^{\prime}+d_5=0$ \\
     \bottomrule
    \label{T4}
    \end{tabular}
\end{table}
The solution to the above ODE for $b_1=0, b_2=1, b_3=1, A_1=1$ and $A_2=1$ is given by
\begin{equation}
H(\lambda)  =\left[\frac{d_5\lambda}{2d_4a}\right] \text{.}
\end{equation}
So, the invariant solution for the model equation. \eqref{11} is as follows: 
\begin{equation}\label{55}
u_3(x, y, z, t)  =\frac{d_5}{d_4a}({\frac{d_4t}{d_2}}-y ) \text{.}
\end{equation}
\subsubsection{\texorpdfstring{$\mathcal{X}=\mathcal{D}_{1}+k \mathcal{D}_{3} \hspace{0.1cm} \text{,} \hspace{0.1cm} k\neq 0$}{X = X1 + k X3 ; k != 0}}\label{S4}
\begin{table}[H]
    \centering
    \begin{tabular}{>{\centering\arraybackslash}m{1.5cm}>{\centering\arraybackslash}m{4cm} >{\centering\arraybackslash}m{9.5cm}}
    \toprule
 \textbf{Reduction} &\textbf{Similarity variables} & \textbf{Reduced equations} \\
    \midrule
     
    1.&$P=\frac{x}{\sqrt{t}}$,\quad $Q=\frac{[z \cos (k \ln (t))+y \sin (k \ln (t))]}{\sqrt{t}}$\text{,} \quad $R=\frac{[-z \sin (k \ln (t))+y \cos (k \ln (t))]}{\sqrt{t}}$, \; $u =\frac{F(P,Q,R)}{\sqrt{t}}$&
    $(kR-\frac{Q}{2})F_{Q}-(kQ+\frac{R}{2})F_{R}+aF{}F_{P}-{\frac{P}{2}}F_{P}-\frac{F{}}{2}+F_{P P}+F_{Q Q}+F_{R R}=0$\\
    \midrule
    2.&$r = P, \; s = Q^2+R^2$ ,\newline  $F =G(r, s)$& 
    $G_{r r}+4sG_{s s}+aGG_{r}+4G_{r}-sG_{s}-{\frac{r}{2}}G_{r}-\frac{G}{2}=0$  \\
     \bottomrule
    \label{T5}
    \end{tabular}
\end{table}
The solution to the above PDE is given by
\begin{equation}
    G(r,s)=\frac{r-8}{a}.
\end{equation}
So, the invariant solution for the model equation. \eqref{11} is as follows: 
\begin{equation}\label{5421}
u_4(x,y,z,t)  =\frac{x-8\sqrt{t}}{a\sqrt{t}}\text{.}
\end{equation}
 \subsection{Invariant Solution Using Analysis of Parameters}
\begin{equation}
    \frac{dx}{\xi_x}= \frac{dy}{\xi_y}=\frac{dz}{\xi_z}= \frac{dt}{\xi_t}= \frac{du}{\xi_u}\text{.}
\end{equation}
\subsubsection{\texorpdfstring{Reduction using Translation symmetry $\mathcal{D}_2$}{X2}} \label{S5}
\begin{equation}
\begin{split}
& P={x},\quad Q={y},\quad R={z},\
\text{and} \hspace{0.1cm} u=F(P,Q,R)\text{,}
\end{split}
\end{equation}
Here, $P$, $Q$, and $R$ are the similarity variables, and $F(P, Q, R)$ is the similarity function. Therefore, Eq. \eqref{11} is converted into
\begin{equation}\label{rpa3}
\begin{split}
& aF{}F_{P}+F_{P P}+F_{Q Q}+F_{R R}=0\text{.} 
\end{split}
\end{equation}
To reduce Eq. \eqref{rpa3}, the listed infinitesimal generators using Lie symmetry are as follows:
\begin{equation}\label{567}
\xi_{P}=d_{1} P+ d_{4}, \quad \xi_{Q}=d_{1} Q+ d_{2} R+ d_{3}, \quad \xi_{R}=d_{1} R- d_{2} Q+ d_{5}, \quad \xi_{F}=-d_{1} F\text{,}
\end{equation}
where $d_{1}$, $d_{2}$, $d_{3}$, $d_{4}$, $d_{5}$  are the arbitrary constants.The characteristic equations of  Eq. \eqref{567} are:
\begin{equation}
\frac{d P}{d_{1} P+ d_{4}}=\frac{d Q}{d_{1} Q+ d_{2} R+ d_{3}}=\frac{d R}{d_{1} R-d_{2} Q+ d_{5}}=\frac{d F}{-d_{1} F}\text{.}
\end{equation} 
\begin{table}[H]
    \centering
    \begin{tabular}{>{\centering\arraybackslash}m{2cm} >{\centering\arraybackslash}m{4cm} >{\centering\arraybackslash}m{5.5cm} >{\centering\arraybackslash}m{5cm}}
    \toprule
    \textbf{Subcase} & \textbf{Similarity variables} & \textbf{Reduced equations} &\textbf{Invariant solutions}  \\
    \midrule
     $d_1 \neq 0$ & 
    $r = \frac{P}{Q}, \; s = \frac{R}{Q},$ \newline $F =\frac{G(r, s)}{Q}$ \vspace{1em} 
    \newline $\lambda=\frac{r^2}{1+s^2},$  \newline $G=\frac{H(\lambda)}{r}$ & 
    $(r^2+1)G_{r r}+(s^2+1)G_{s s}+aG{}G_{r}+4rG_{r}+2G{}+2rsG_{r s}+4sG_{s}=0$ \vspace{2em} 
    \newline $-aH^2+4\lambda^2(\lambda+1) H^{\prime \prime}+2\lambda(aH+2\lambda-1) H^{\prime}+ 2H=0$ &
    $u_5(x,y,z,t)=\frac{2x}{2C\sqrt{x^2+y^2+z^2}\hspace{0.1cm} +ax}$\\
    \midrule
    $d_2 \neq 0$ & 
    $r = P, \; s = Q^2+R^2,$ \newline $F =G(r, s)$ \vspace{1em} 
    \newline $\lambda=\frac{rA_{1}+A_{2}}{A_{1}\sqrt{s}},$  \newline $G=\frac{H(\lambda)}{\sqrt{s}}$ & 
    $aG_{}G_{r} + G_{rr} +4G_s +4sG_{ss}=0$ \vspace{2em} 
    \newline $(A_{1} \lambda^2+A_{1}) H^{\prime \prime}+(aA_{1} H +3A_{1}\lambda) H^{\prime}+ H=0$  &
    No solution \\
    \midrule
   $d_3 \neq 0$ & 
    $r = P, \; s = R,$ \newline $F = G(r, s)$ \vspace{1em} 
    \newline $\lambda=\frac{A_{1} r+A_{2}}{A_{1}(A_{1} s+A_{3})},$  \newline $G=\frac{H(\lambda)}{A_1s+A_{3}}$ & 
    $AGG_r + G_{rr} + G_{ss} = 0$  \vspace{2em} 
    \newline $ (1+ \lambda^2A_{1}^2) H^{\prime \prime}+(aH +4A_{1}^2\lambda) H^{\prime}+ 2A_{1}^2H=0$ &
    $u_6(x,y,z,t)=\frac{4A_1}{4CA_1(A_1 B+1)+(a B^2+a) \tan^{-1}B+ a B}$ \vspace{1em}
    \newline $B=\frac{A_1x+A_2}{A_1y+A_3}$\\
 \bottomrule
    \label{T6}
    \end{tabular}
\end{table} 
 \section{Results and Discussion} \label{6}
 The physical behavior of the model \eqref{11} is shown in this section using the solutions from the previous section, and the following observations have been made:
 \begin{enumerate} 
\item 
 The solution of equation \eqref{535} is illustrated in Figure 1. Figure 1(a) represents the evolution of wave amplitude, which decreases gradually. Figure 1(b) represents the contour sketch of the equation \eqref{535} that highlights concentric level curves. This solution could resemble a stable wave structure with controlled dispersion.

\item The solution of equation \eqref{542} is illustrated in Figure 2. Figure 2(a) shows that the amplitude $u(x, t)$ is changing as the time increases. This directional change in the amplitude $u(x, t)$ represents wave propagation influenced by the magnetic field. It shows sharper variation as compared to $u_1(x,t)$. Figure 2(b) highlights directional dependence in the propagation of the wave, which directs the result that $u_2(x,t)$ has stronger dispersive features with local steepening in amplitude.

\item The solution of equation \eqref{55} is illustrated in Figure 3. Figure 3(a) shows the linear variation of $u(y,t)$. The yellow and green color gradient highlights the negative amplitude, whereas blue represents the positive amplitude. Wave amplitude decreases linearly with $y$ and $t$ as it is tilted diagonally. Figure 3(b) represents the contour sketch of Eq. \eqref{55}. This type of wave is observed in magnetized plasma, such as in Earth's magnetosphere.

\item The solution of equation \eqref{5421} is illustrated in Figure 4. The solution profile $u_4(x,t)$ is similar to that of $u_2(x,t)$, as both show smooth growth without localized peaks. Similarly, the contour plot in Figure 4(b) displays steady amplitude variation. 

\item Figure 5(a) is the $3D$ graph for the subcase $d_1\neq 0$. The surface plot of $u_5(x,y)$ shows that it has less amplitude than $u_4(x,t)$. Figure 5(b) represents the contour plot for the subcase $d_1\neq 0$, which shows how the intensity of the wave varies gradually. This behavior of $u_5(x,y)$ shows its weak dispersion effects.

 \item Figure 6(a) represents the $3D$ graph for the subcase $d_3\neq 0$. The solution profile of $u_6(x,y)$ displays a linear pattern of growth with a little bit of variation of $x$ and $y$. Figure 6(b) represents the contour sketch for the subcase $d_3 \neq 0$. This shows that $u_6(x,y)$ doesn't have strong oscillatory behavior, which directs its steadily amplifying solution.
\end{enumerate}
\begin{figure}[H]
\centering
\begin{minipage}{.38\textwidth}
    \subfloat[]{\includegraphics[width=\textwidth]{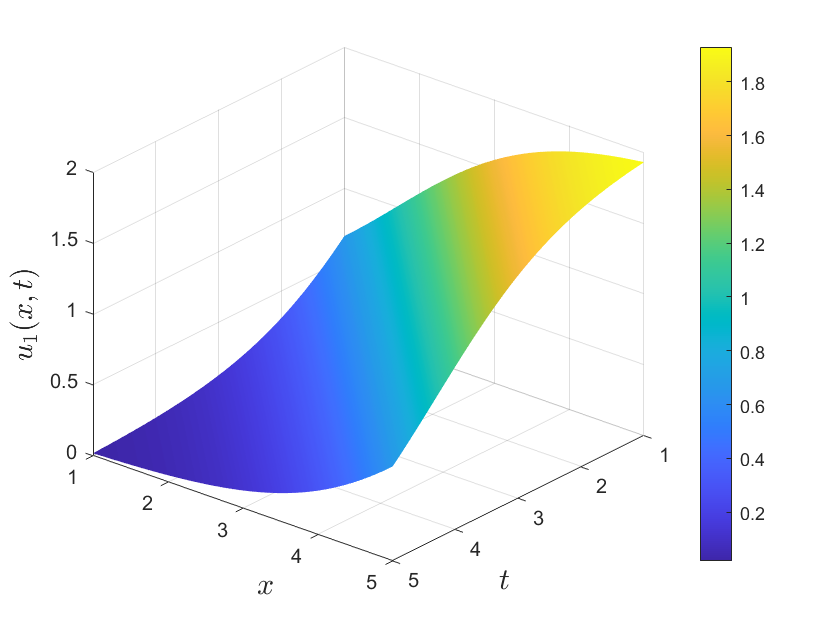}\label{fig:sub_1a}}
\end{minipage}
\begin{minipage}{.38\textwidth}
    \subfloat[]{\includegraphics[width=\textwidth]{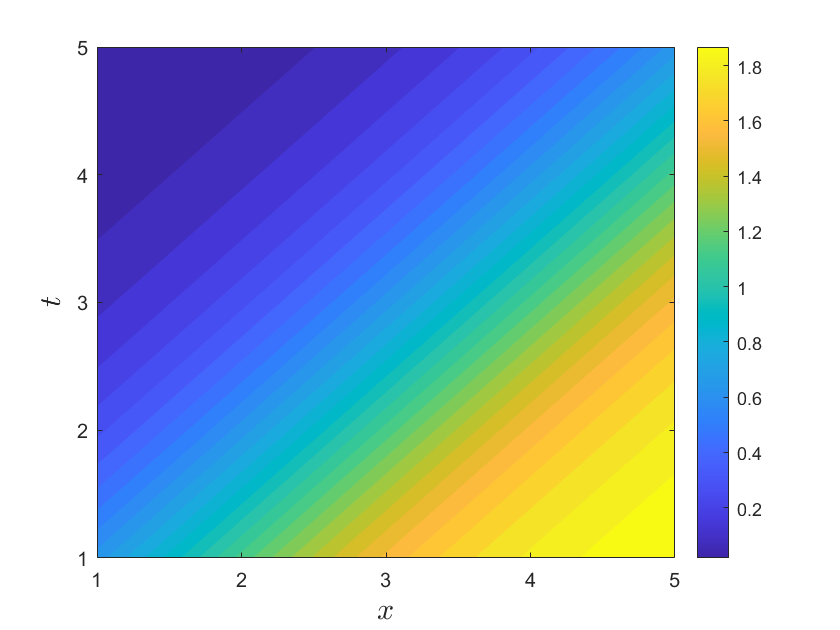}\label{fig:sub_1d}}
\end{minipage}\\
    \caption{Solution profile of amplitude $u_1(x,t)$ to Eq. \ref{535}.}\label{fig:1}
    \begin{minipage}{.38\textwidth}
    \subfloat[]{\includegraphics[width=\textwidth]{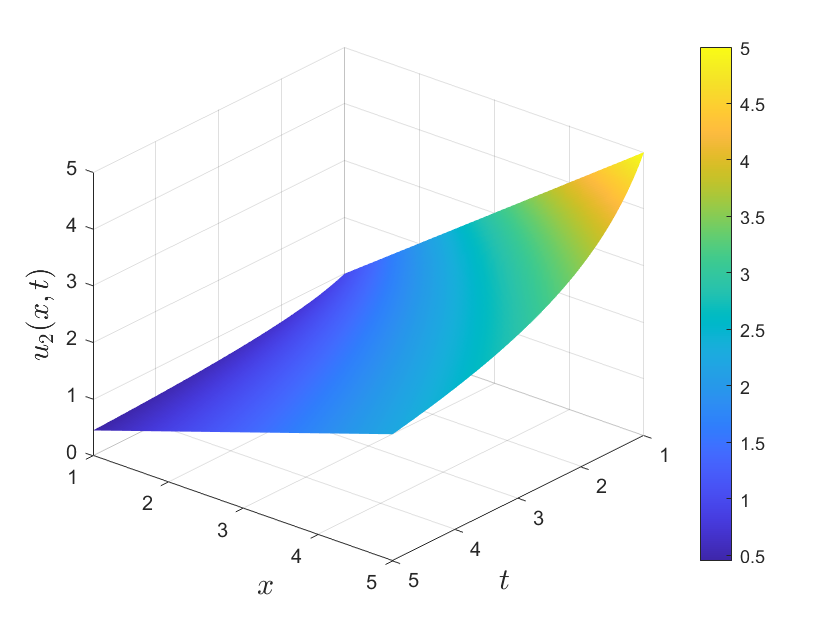}\label{fig:sub_2aa}}
\end{minipage}
   \begin{minipage}{.38\textwidth}
    \subfloat[]{\includegraphics[width=\textwidth]{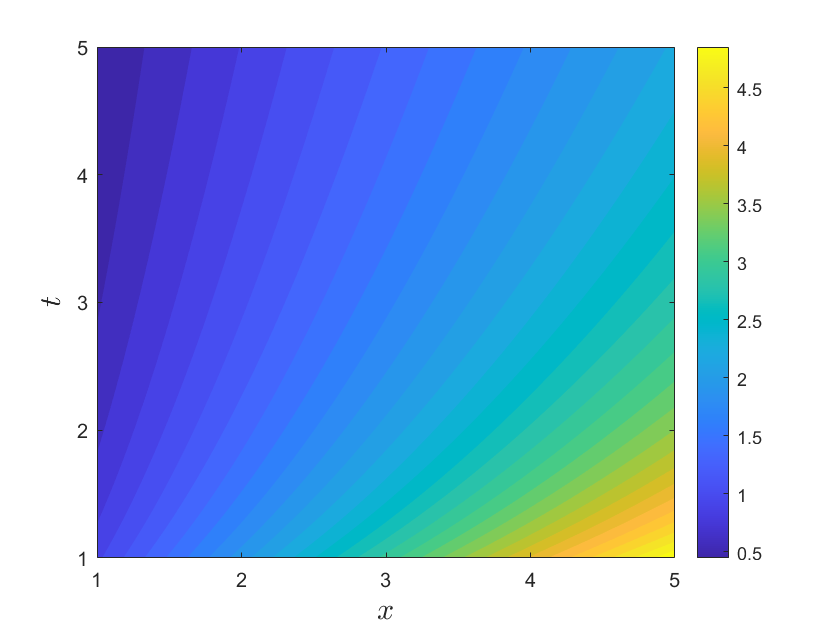}\label{fig:sub_2d}}
\end{minipage}\\
    \caption{Solution profile of amplitude $u_2(x,t)$ to Eq. \ref{542}.}\label{fig:2}
    \begin{minipage}{.38\textwidth}
    \subfloat[]{\includegraphics[width=\textwidth]{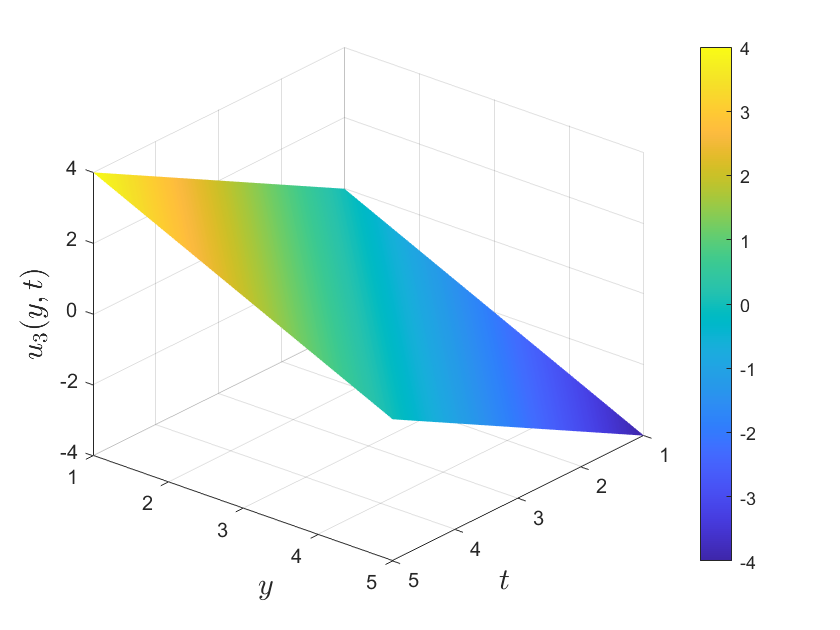}\label{fig:sub_3a}}
\end{minipage}
 \begin{minipage}{.38\textwidth}
    \subfloat[]{\includegraphics[width=\textwidth]{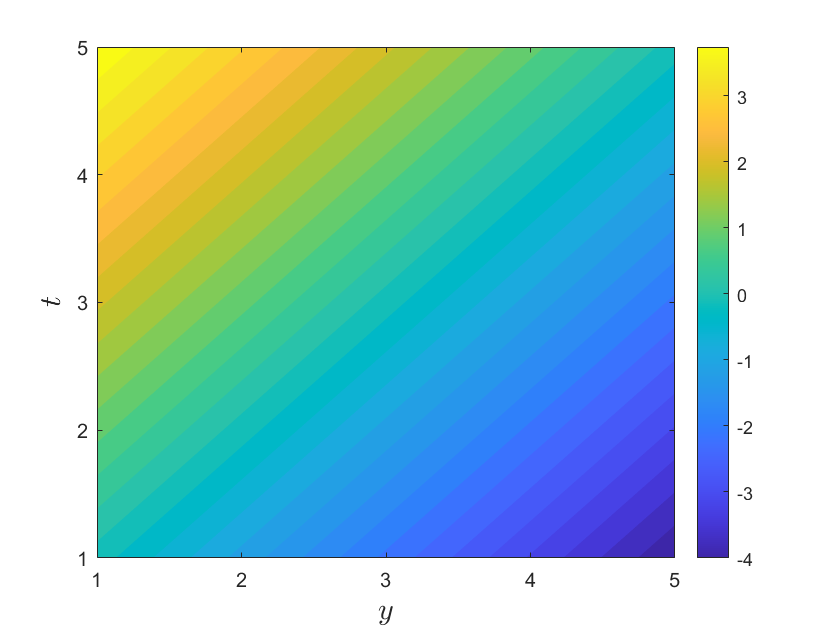}\label{fig:sub_3b}}
\end{minipage}\\
    \caption{Solution profile of amplitude $u_3(y,t)$ to Eq. \ref{55}.}\label{fig:3}
     \end{figure}
    \begin{figure}[H]
\centering
     \begin{minipage}{.38\textwidth}
    \subfloat[]{\includegraphics[width=\textwidth]{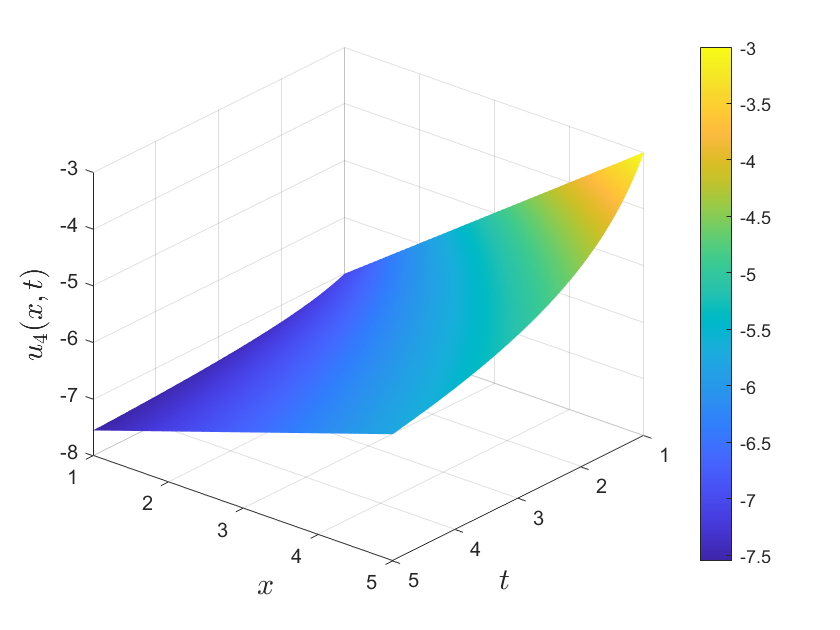}\label{fig:sub_2a}}
\end{minipage}
   \begin{minipage}{.38\textwidth}
    \subfloat[]{\includegraphics[width=\textwidth]{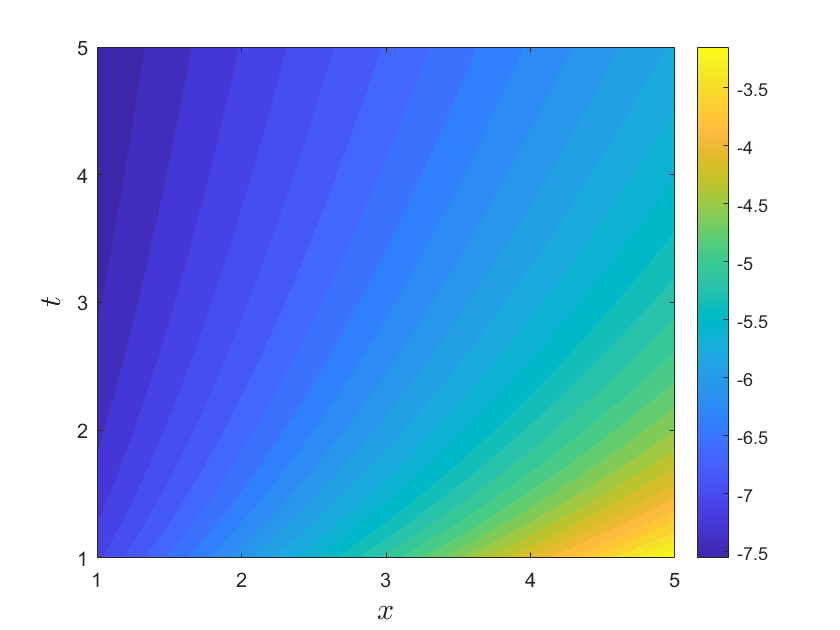}\label{fig:sub_2da}}
\end{minipage}\\
    \caption{Solution profile of amplitude $u_4(x,t)$ to Eq. \ref{5421}.}\label{fig:3aa}
    \end{figure}
    \begin{figure}[H]
\centering
\begin{minipage}{.38\textwidth}
 \subfloat[]{\includegraphics[width=\textwidth]{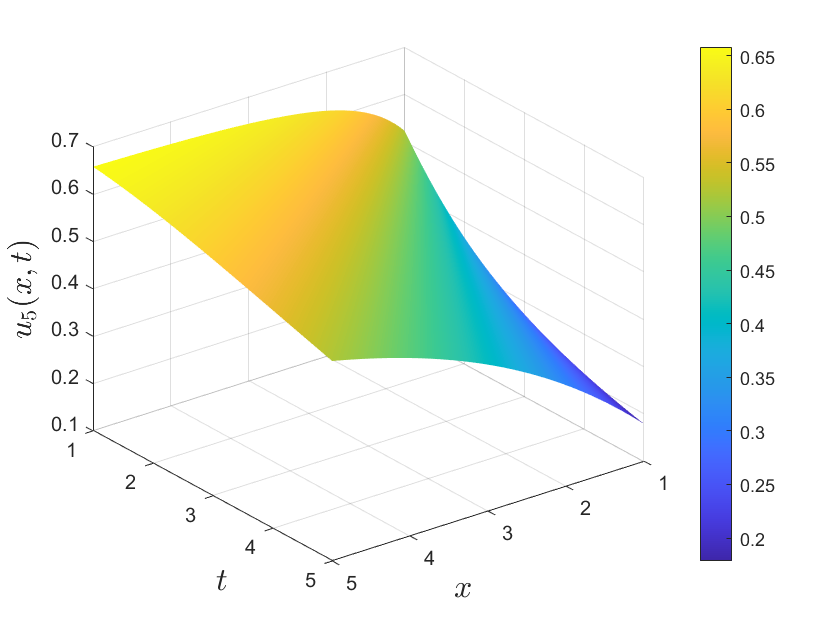}\label{fig:sub_4a}}
\end{minipage}
\begin{minipage}{.38\textwidth}
    \subfloat[]{\includegraphics[width=\textwidth]{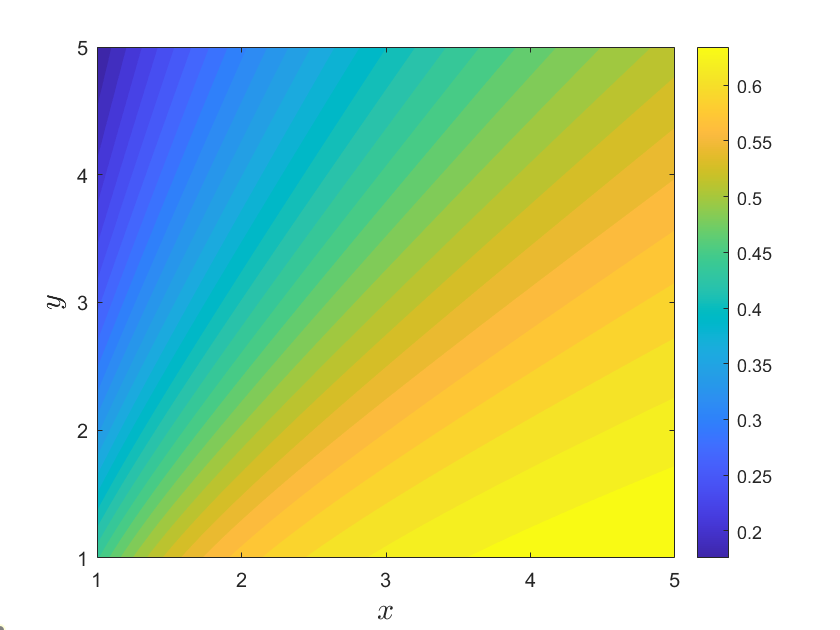}\label{fig:sub_4b}}
\end{minipage}
\caption{Solution profile of amplitude $u_5(x,y)$ for  $d_1\neq0$.}\label{fig:3aaa}
\begin{minipage}{.38\textwidth}
 \subfloat[]{\includegraphics[width=\textwidth]{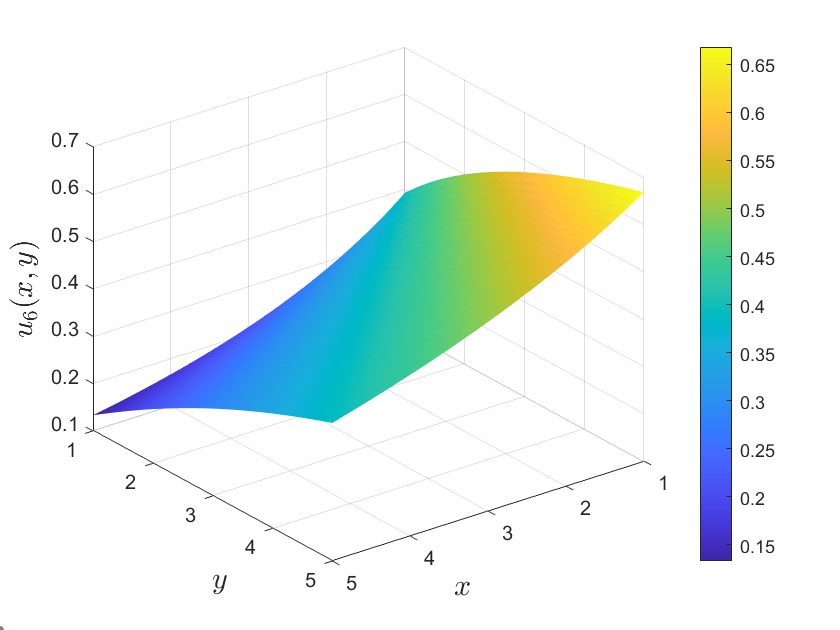}\label{fig:sub_4f}}
\end{minipage}
\begin{minipage}{.38\textwidth}
    \subfloat[]{\includegraphics[width=\textwidth]{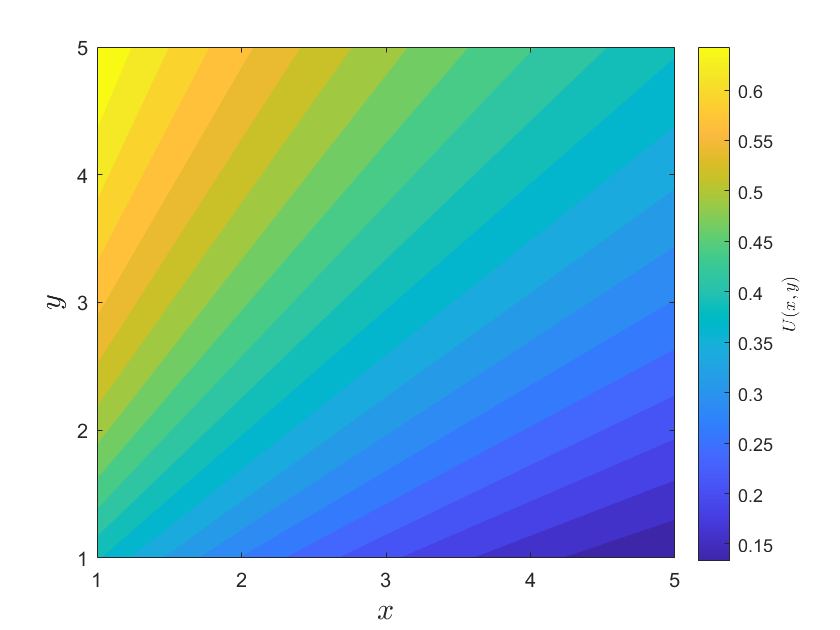}\label{fig:sub_4g}}
\end{minipage}
  \caption{Solution profile of amplitude $u_6(x,y)$ for $d_3\neq0$.}\label{fig:4.1}
\end{figure}
\section{Traveling Wave Solution}\label{7}
To find traveling wave solution of \eqref{11} , we assume the transformation as,
\begin{equation}
    \eta=ax+by+cz-dt, \hspace{2em} u(x,y,z,t)= K(\eta).
\end{equation}
Substituting the above values in the governing model \ref{11}, we get
\begin{equation}
    -d \frac{d K(\eta)}{d \eta}+a^2 k(\eta) \frac{d K(\eta)}{d \eta}+a^2 k(\eta) \frac{d^{2} K(\eta)}{d \eta^2}+b^2 k(\eta) \frac{d^{2} K(\eta)}{d \eta^2}+c^2 k(\eta) \frac{d^{2} K(\eta)}{d \eta^2}=0,
\end{equation}
On solving above ODE we get,
\begin{equation}
    K(\eta)=\frac{a^2 b+ \tanh({\frac{(C_2+\eta){\sqrt{2 C_1 a^8+2C_1 a^6b^2+2C_1a^6c^2}}}{2a^2(a^2+b^2+c^2)}) {\sqrt{2 C_1 a^8+2C_1 a^6b^2+2C_1a^6c^2}}}}{a^4},
\end{equation}
which provides the solution of \ref{11} as,
\begin{equation}\label{5.1}
   u_7(x,y,z,t)= \frac{a^2 b+ \tanh({\frac{(C_2+ax+by+cz-dt){\sqrt{2 C_1 a^8+2C_1 a^6b^2+2C_1a^6c^2}}}{2a^2(a^2+b^2+c^2)}) {\sqrt{2 C_1 a^8+2C_1 a^6b^2+2C_1a^6c^2}}}}{a^4},
\end{equation}
where $C_1$ and $C_2$ are integration constants. 
\begin{figure}[H]
\centering
\begin{minipage}{.38\textwidth}
    \subfloat[]{\includegraphics[width=\textwidth]{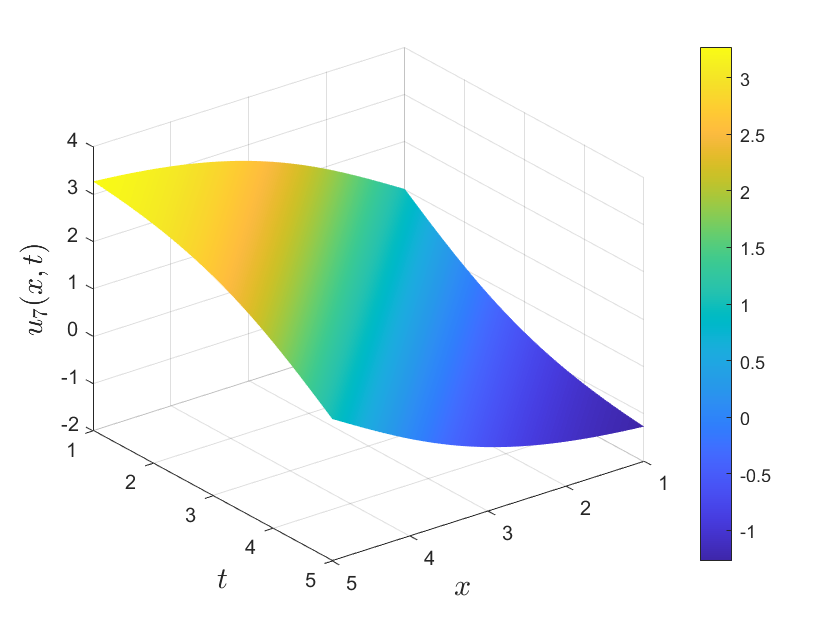}\label{fig:sub_5a}}
\end{minipage}
\begin{minipage}{.38\textwidth}
    \subfloat[]{\includegraphics[width=\textwidth]{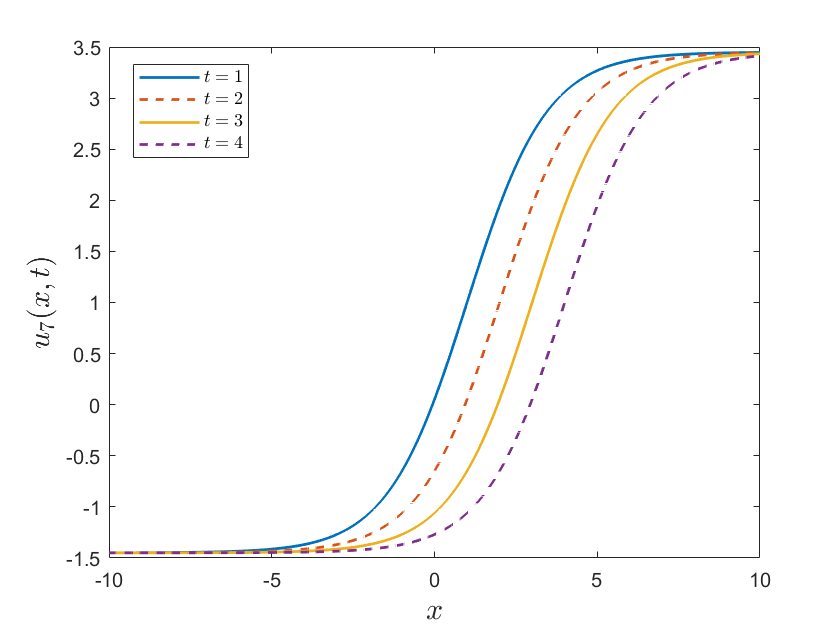}\label{fig:sub_5b}}
\end{minipage}\\
    \caption{3D and 2D plots for $u_7(x,t)$ corresponding to \eqref{5.1}.}\label{fig:5}
    \end{figure}
    \begin{itemize}
        \item Figures $7(a)$ and $7(b)$ show the 3D and 2D plots for $u$, respectively, that demonstrate how the wave retains its shape as it moves along the $x$ axis over $t$. It represents the kink-type solitary wave.
   \end{itemize} 
  
\section{Modulation Instability}\label{8}
To understand the nature of stability and the behavior of soliton structures, the concept of modulation instability is explored in this section. We analyze the concept of modulation instability of the traveling wave solution by using the dispersion relation and gain spectrum, which shows the circumstances under which minor perturbation grows, creating complex structures, indicating instability and the possibility of soliton formation. This study, in particular, makes it easy to understand how the different parameters affect the wave's evolution, which is necessary to understand the dynamic stability of solitons in practical applications.\\
To execute modulation stability \cite{qawaqneh2024stability} ; 
\begin{equation}\label{8.1}
    v=(u+\sqrt{A})e^{iAt},
\end{equation}
where $A$ is a sensitivity parameter.
Substituting \eqref{8.1} in \eqref{11} and after linearizing, we get,
\begin{equation}\label{8.2}
    iAu+iA^{3/2}+u_t+a\sqrt{A} e^{iAt} u_x+u_{xx}+u_{yy}+u_{zz}=0.
\end{equation}
Consider the solution of \eqref{8.2} as:
\begin{equation}\label{8.3}
 u(x,y,z,t)=a_1 e^{i(bx+cy+dz-wt)}+a_2 e^{-i(bx+cy+dz-wt)},  
\end{equation}
where $b, c, d$ are wave numbers and $w$ is the angular frequency.
 On Substituting \eqref{8.3} into \eqref{8.2} and then collecting the coefficient of $e^{i(bx+cy+dz-wt)}$ and $e^{-i(bx+cy+dz-wt)}$, we get the following dispersion relation by solving the determinant of the coefficient matrix
\begin{equation}
    w=\sqrt{A^2-(b^2+c^2+d^2)},
\end{equation}
where A is a parameter related to the amplitude or characteristic frequency, and $p=\sqrt{b^2+c^2+d^2}$ is a coefficient that influences the system. The gain spectrum, which affects the growth rate 
of perturbations, is defined as
\begin{equation}
    G(A)= 2Im(w).
\end{equation}
 It is important to differentiate between the stable and unstable regions for stability analysis. 
 \begin{itemize}
     \item When $A^2>p^2=b^2+c^2+d^2$, a real value of $w$ is obtained by the dispersion relation, which indicates that the system is stable and there is no modulation instability. Here, the gain spectrum is zero because there are no growing modes.
     \item Conversely, when  $A^2<p^2=b^2+c^2+d^2$, an imaginary value of $w$ is obtained by the dispersion relation, leading to modulation instability.
 \end{itemize} Here, the gain spectrum is given by
\begin{equation}
    G(A)=2Im(w)=2\sqrt{p^2-A^2}.
\end{equation}
\begin{figure}[H]
\centering
\begin{minipage}{.38\textwidth}
    \subfloat[]{\includegraphics[width=\textwidth]{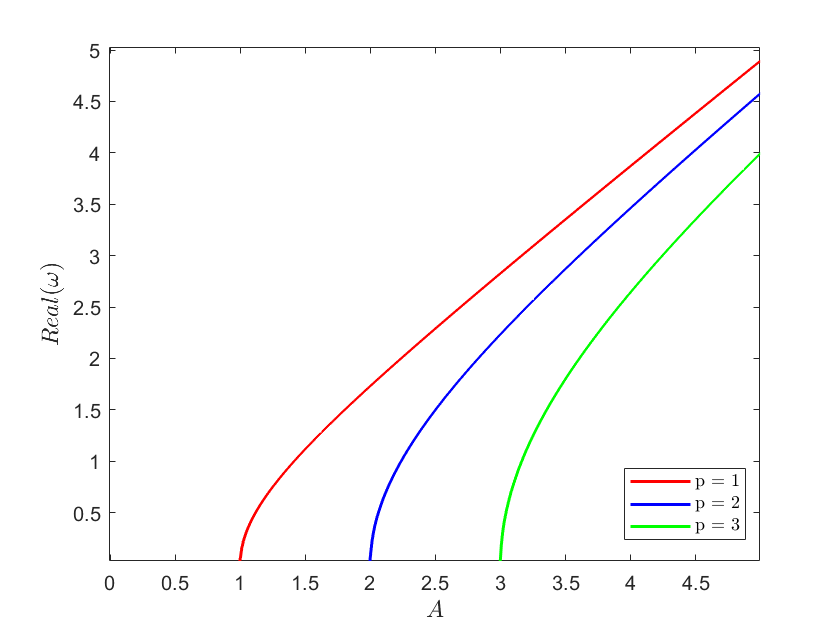}\label{fig:sub_6a}}
\end{minipage}
\begin{minipage}{.38\textwidth}
    \subfloat[]{\includegraphics[width=\textwidth]{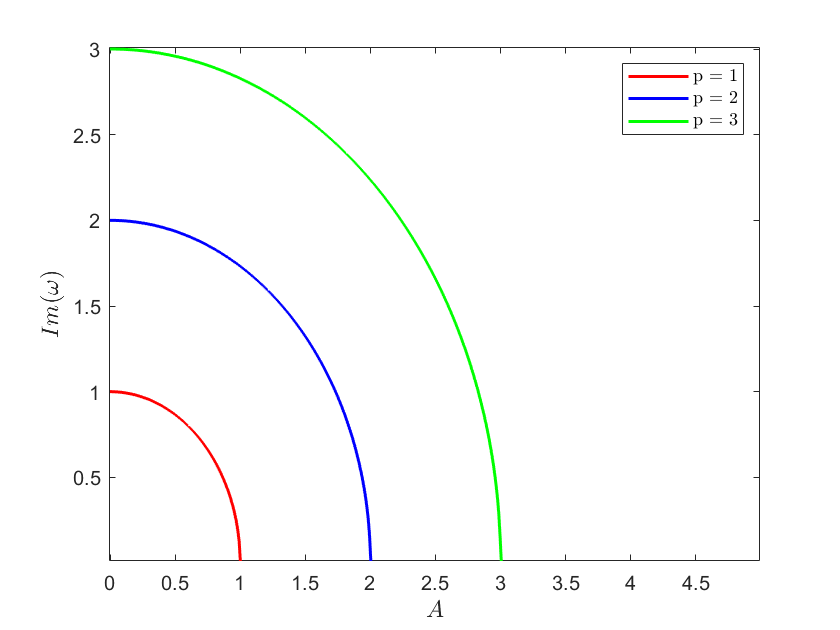}\label{fig:sub_6b}}
\end{minipage}\\
 \caption{Frequency of $w$ for distinct $p's$ (a) $A\geq p$ and (b) $A<p$.}\label{fig:6}
\end{figure}
\begin{figure}[H]
\centering
\begin{minipage}{.38\textwidth}
    \subfloat[]{\includegraphics[width=\textwidth]{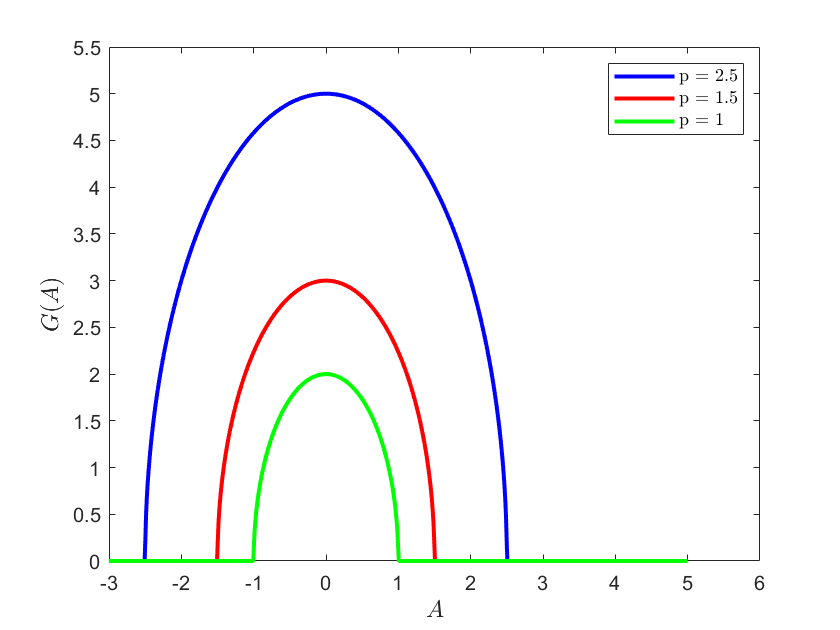}\label{fig:sub_7a}}
\end{minipage}\\
    \caption{Gain spectrum for distinct $p's$.}\label{fig:7}
    \end{figure}
        \begin{itemize}
        \item Figure $8(a)$ shows the curve for values of A that exceed $p$, where $w$ is a real number, and the graph illustrates positive values of $w$. Figure $8(b)$ shows the curve for values of A that are less than $p$, where $w$ is an imaginary number, and the graph illustrates negative values of $w$. These visualizations show the transition of the system between the stable and unstable regions. 

        Two different scenarios are shown in Figure $9$. Plotting the gain spectrum reveals non-zero values, which indicate modulation instability. The gain spectrum rises with $p$, indicating that disturbances are becoming more frequent. Stability is indicated by the zero-gain spectrum. In this instance, the plot displays a horizontal line at zero across the range of $p$, indicating that the system does not display modulation instability.
    \end{itemize}
    \section{Numerical Method and Validation} \label{9}
    To verify the accuracy and consistency of the invariant solutions of the (3+1)-dimensional ZK equation, obtained from the Lie symmetry reductions, we solve the resulting nonlinear ODEs using two widely used numerical methods: the classical fourth-order Runge-Kutta (RK4) method and MATLAB's adaptive ode45 solver. 

    We apply the RK method to each similarity-reduced ODE with appropriate initial conditions. We also solve these ODEs using MATLAB's ode45, based on an adaptive Runge-Kutta-Fehlberg (RKF) scheme of order 4(5) to cross-validate the results. To maintain control of error, this solver adjusts the step size accordingly, ensuring the accuracy and stability of ODEs.

   Both methods were used on the same equations with the same parameter value and initial conditions in our simulation. As illustrated in the comparison plots ( \ref{fig:11}, \ref{fig:12}, \ref{fig:13}, \ref{fig:14}, \ref{fig:15} ), the solution profile of the dependent variable and its derivative $H(x)$ and $H'(x)$ obtained from RK4 and ode45 show great agreement. The correctness of our RK implementation is confirmed by the overlapping of these graphs. 
   \section{Comparative Analysis of Analytical and Numerical Results} \label{10}
     This section is devoted to the comparison between the analytical solutions of the reduced ODEs using Lie symmetry reduction and the numerical counterparts obtained using the RK4 method. For fair comparison, the values of the parameters used in both analytical and numerical solutions are identical. For all figures, the solid and dashed lines represent the analytical and numerical solutions, respectively. The parameters used for the respective ODEs are as follows:
     \begin{itemize}
         \item Figure \ref{fig:11}: $a=1; b_2=1; b_5=0; b_6=1.$
         \item Figure \ref{fig:12}: $a= 1; b_1 = 1; b_2 = 2; b_3 = 1; A_1 = 1; A_2 = 1; d_2 = 1; d_4 = 1; d_7 = 1.$
         \item Figure \ref{fig:13}: $a=1; \lambda=1.$
         \item Figure \ref{fig:14}: $a=1; A_1=1; \lambda=1.$
         \item Figure \ref{fig:15}: $a=1; A_1=1; \lambda=1.$
     \end{itemize}
    In all cases, the comparison plots ( \ref{fig:11}, \ref{fig:12}, \ref{fig:13}, \ref{fig:14}, \ref{fig:15} ) illustrate that the analytical and numerical plots are almost indistinguishable; that confirms the stability and correctness of the solutions. 
   \begin{figure}[H]
\centering
\begin{minipage}{.35\textwidth}
    \subfloat[]{\includegraphics[width=\textwidth]{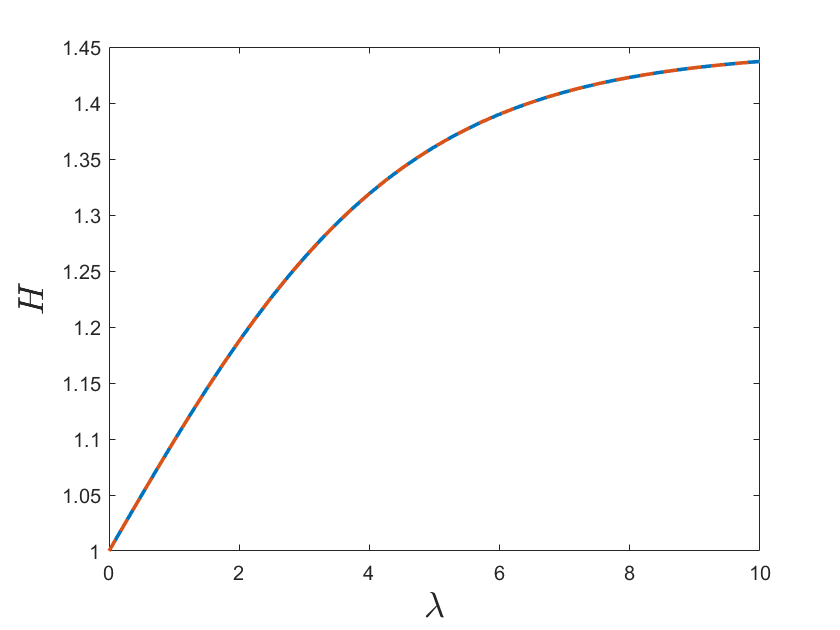}\label{fig:sub_11a}}
\end{minipage}
\begin{minipage}{.35\textwidth}
    \subfloat[]{\includegraphics[width=\textwidth]{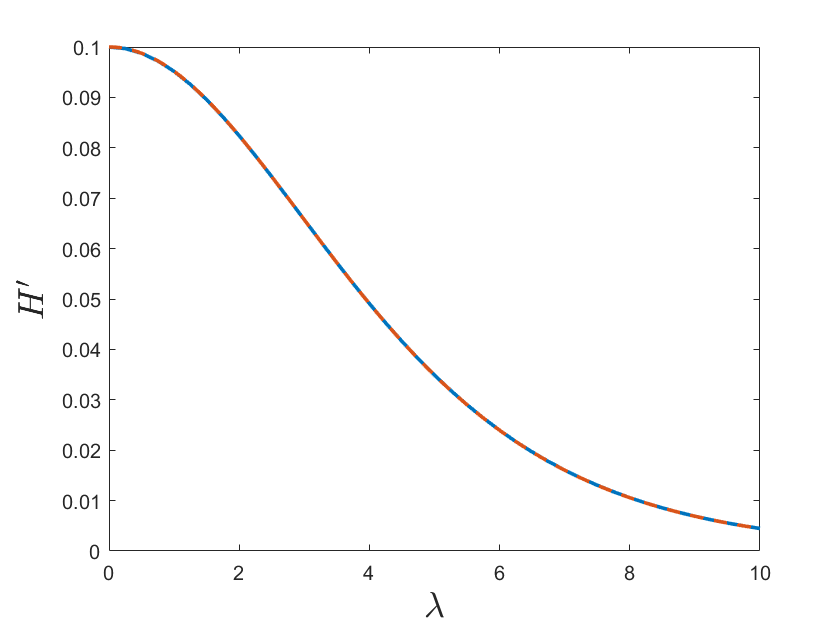}\label{fig:sub_11b}}
\end{minipage}\\
    \caption{ODE plot of $H(\lambda)$ and $H'(\lambda)$ for symmetry ${X}=b_{2} \mathfrak{D}_{2}+\mathfrak{D}_{3}+b_{5} \mathfrak{D}_{5}+b_6 \mathfrak{D}_{6}$    \ref{S1}.}\label{fig:11}
    \begin{minipage}{.35\textwidth}
    \subfloat[]{\includegraphics[width=\textwidth]{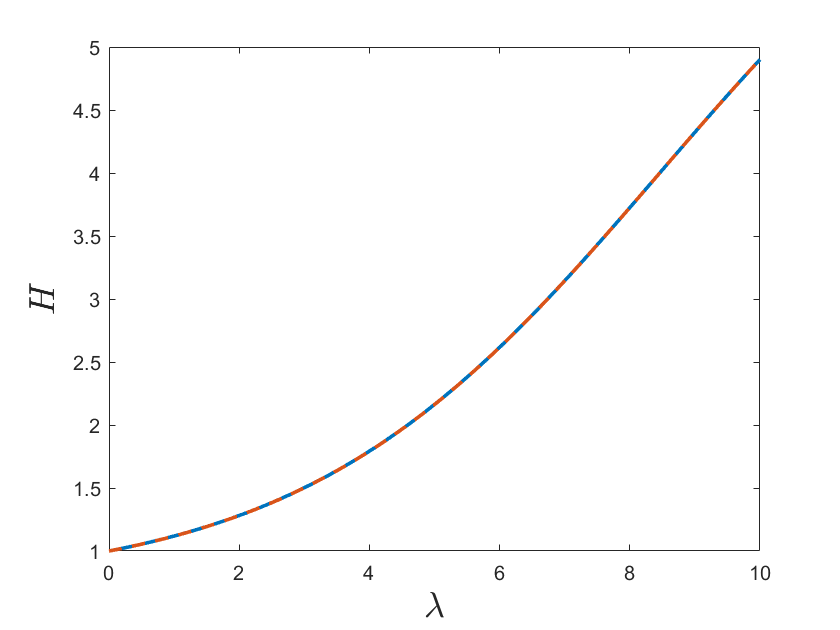}\label{fig:sub_12a}}
\end{minipage}
   \begin{minipage}{.35\textwidth}
    \subfloat[]{\includegraphics[width=\textwidth]{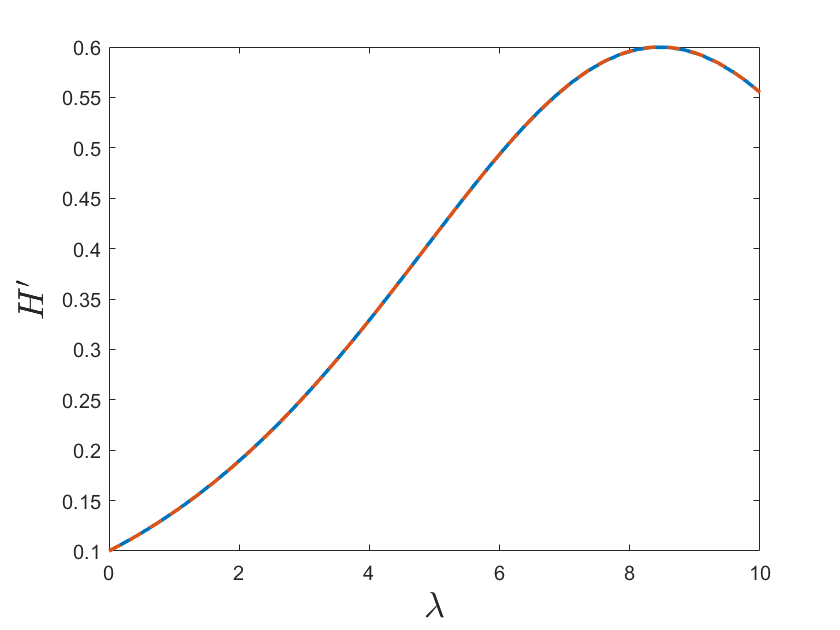}\label{fig:sub_12b}}
\end{minipage}\\
    \caption{ODE plot of $H(\lambda)$ and $H'(\lambda)$ for symmetry ${X}=d_{2} \mathfrak{D}_{2}+d_{4} \mathfrak{D}_{4}+d_{5} \mathfrak{D}_{5}+d_{6} \mathfrak{D}_{6}+d_{7} \mathfrak{D}_{7}$ \ref{S3}.}\label{fig:12}
     \end{figure}
    \begin{figure}[H]
\centering
    \begin{minipage}{.35\textwidth}
    \subfloat[]{\includegraphics[width=\textwidth]{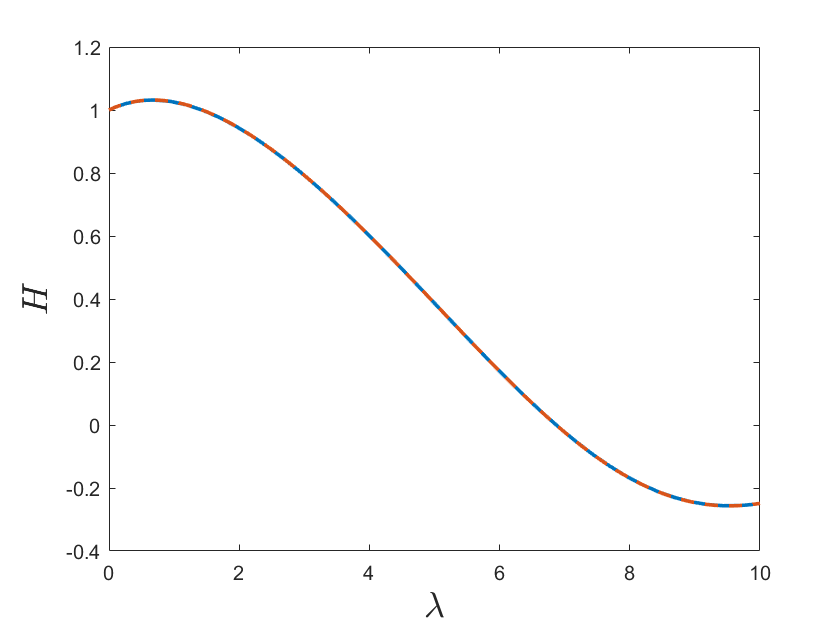}\label{fig:sub_13a}}
\end{minipage}
 \begin{minipage}{.35\textwidth}
    \subfloat[]{\includegraphics[width=\textwidth]{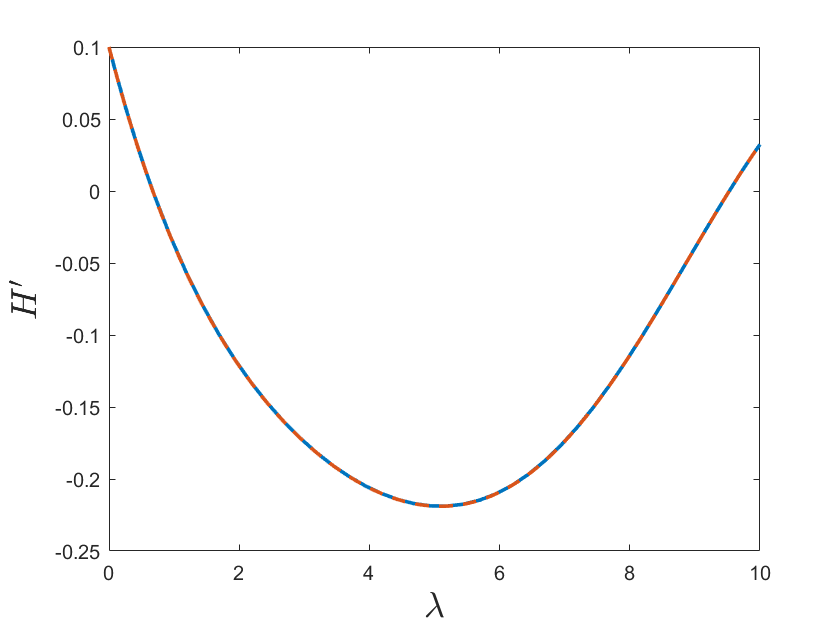}\label{fig:sub_13b}}
\end{minipage}\\
    \caption{ODE plot of $H(\lambda)$ and $H'(\lambda)$ for translation symmetry $\mathcal{D}_{2}$ for sub-case $d_1\neq 0$ \ref{S5}.}\label{fig:13}
    \end{figure}
    \begin{figure}[H]
\centering
\begin{minipage}{.35\textwidth}
 \subfloat[]{\includegraphics[width=\textwidth]{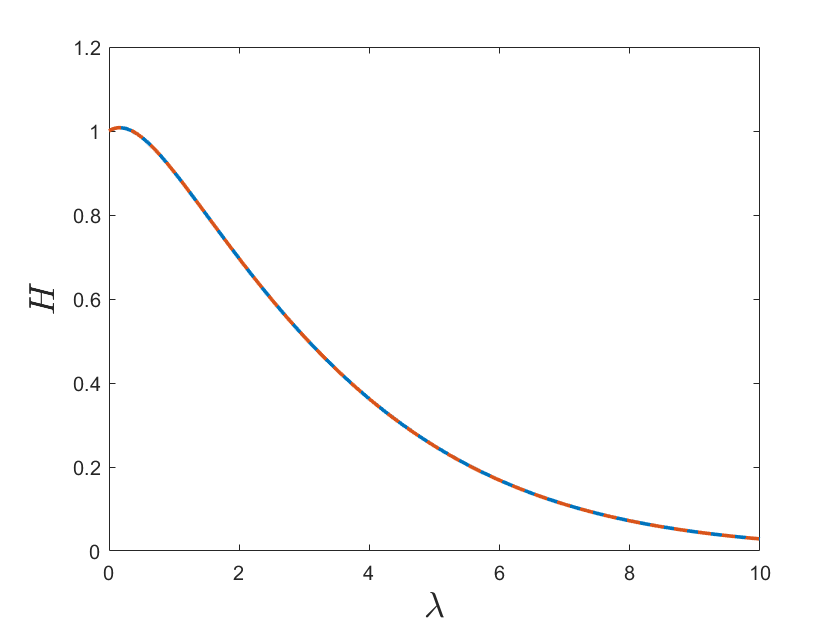}\label{fig:sub_14a}}
\end{minipage}
\begin{minipage}{.35\textwidth}
    \subfloat[]{\includegraphics[width=\textwidth]{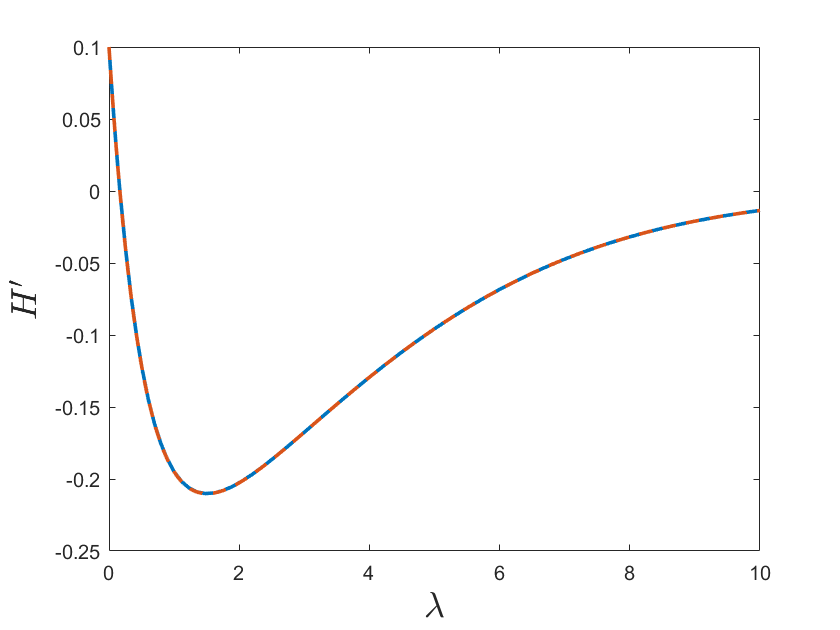}\label{fig:sub_14b}}
\end{minipage}
  \caption{ODE plot of $H(\lambda)$ and $H'(\lambda)$ for translation symmetry $\mathcal{D}_{2}$ for sub-case $d_2\neq 0$ \ref{S5}. }\label{fig:14}
\end{figure}

 \begin{figure}[H]
\centering
\begin{minipage}{.35\textwidth}
 \subfloat[]{\includegraphics[width=\textwidth]{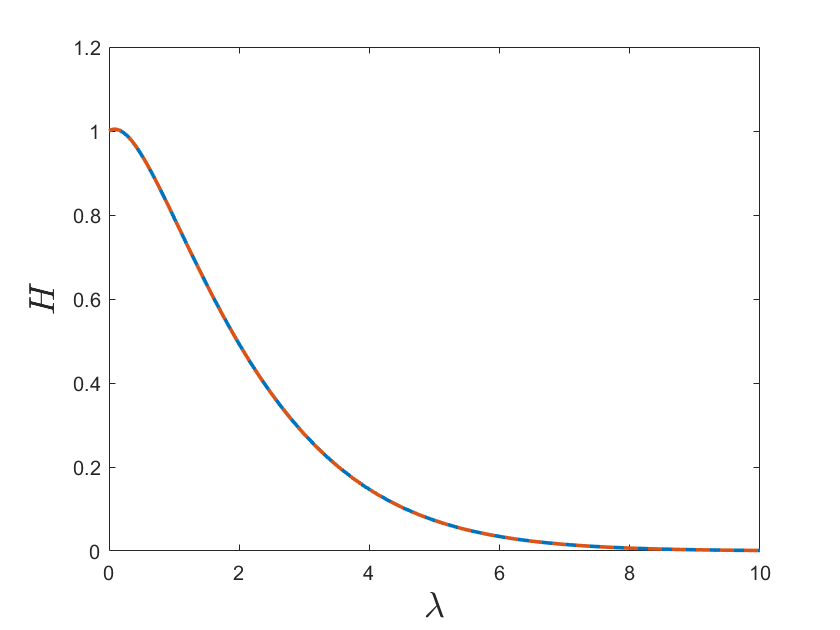}\label{fig:sub_15a}}
\end{minipage}
\begin{minipage}{.35\textwidth}
    \subfloat[]{\includegraphics[width=\textwidth]{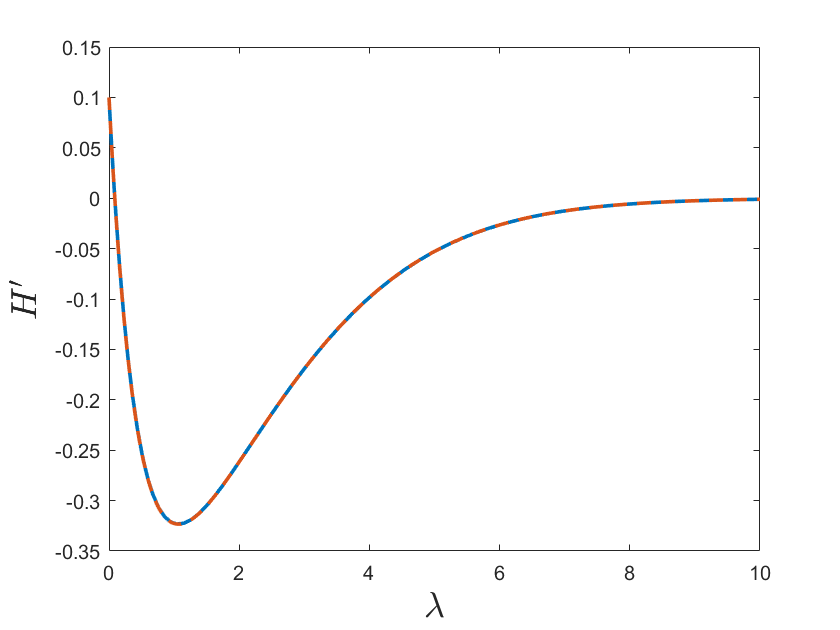}\label{fig:sub_15b}}
\end{minipage}
  \caption{ODE plot of $H(\lambda)$ and $H'(\lambda)$ for translation symmetry $\mathcal{D}_{2}$ for sub-case $d_3\neq 0$ \ref{S5}. }\label{fig:15}
\end{figure}
   
\section{Nonlinear Self-Adjointness for ZK Equation} \label{11a}
\quad \quad Ibragimov \cite{ibragimov2011nonlinear} introduced the concept of nonlinear self-adjointness for a given system of partial differential equations.
\begin{thm}
The given model Eq. \eqref{11} is nonlinearly self-adjoint.
\end{thm}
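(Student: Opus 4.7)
The plan is to verify the definition of nonlinear self-adjointness in the sense of Ibragimov, namely to exhibit a non-trivial substitution $v=\phi(x,y,z,t,u)$ for which the adjoint of \eqref{11} reduces, on that substitution, to a multiple of the original equation. Concretely, I will introduce the formal Lagrangian
\begin{equation*}
\mathcal{L}\;=\;v\bigl(u_{t}+a u u_{x}+u_{xx}+u_{yy}+u_{zz}\bigr),
\end{equation*}
treat $v$ as an independent dependent variable, and compute the adjoint equation $F^{*}=\delta\mathcal{L}/\delta u$ via the Euler--Lagrange operator
\begin{equation*}
\frac{\delta}{\delta u}=\frac{\partial}{\partial u}-D_{t}\frac{\partial}{\partial u_{t}}-D_{x}\frac{\partial}{\partial u_{x}}+D_{x}^{2}\frac{\partial}{\partial u_{xx}}+D_{y}^{2}\frac{\partial}{\partial u_{yy}}+D_{z}^{2}\frac{\partial}{\partial u_{zz}}.
\end{equation*}
A short calculation (the nonlinear term $avuu_{x}$ produces two contributions that cancel) yields the clean expression $F^{*}=-v_{t}-au\,v_{x}+v_{xx}+v_{yy}+v_{zz}$.

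Next I would impose the substitution $v=\phi(x,y,z,t,u)$ and require $F^{*}\big|_{v=\phi}=\lambda F$ for some function $\lambda=\lambda(x,y,z,t,u,\ldots)$. Replacing every partial derivative of $v$ by the corresponding total derivative of $\phi$ and expanding via the chain rule introduces the jet variables $u_{t},u_{x},u_{y},u_{z},u_{x}^{2},u_{y}^{2},u_{z}^{2},u_{xx},u_{yy},u_{zz}$ in $F^{*}|_{v=\phi}$. Matching the coefficients of these independent jet variables on both sides of $F^{*}|_{v=\phi}=\lambda F$ produces an over-determined linear system for $\phi$ and $\lambda$. The coefficients of $u_{xx},u_{yy},u_{zz}$ force $\phi_{u}=\lambda$, while the coefficient of $u_{t}$ forces $-\phi_{u}=\lambda$, so $\lambda=0$ and $\phi_{u}=0$. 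The vanishing of $\phi_{uu}$, $\phi_{xu}$, $\phi_{yu}$, $\phi_{zu}$ is then automatic, and the remaining free term reduces to $-\phi_{t}-au\,\phi_{x}+\phi_{xx}+\phi_{yy}+\phi_{zz}=0$, which (separating the $u$-linear piece) collapses to $\phi_{x}=0$ together with the linear backward diffusion--type constraint $\phi_{t}=\phi_{yy}+\phi_{zz}$ on $\phi(y,z,t)$.

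I would then conclude by exhibiting any nontrivial solution of this constraint, e.g.\ the constant choice $\phi\equiv 1$, which trivially satisfies both $\phi_{x}=0$ and $\phi_{t}=\phi_{yy}+\phi_{zz}$. Since $\phi\not\equiv 0$, this provides the required substitution and proves that \eqref{11} is nonlinearly self-adjoint with $\lambda=0$. If a richer family is desired for later use in constructing conservation laws via Ibragimov's formula, one can instead take $\phi$ to be any smooth solution of $\phi_{t}=\phi_{yy}+\phi_{zz}$ independent of $x$ and $u$, which gives an infinite-dimensional family of admissible substitutions.

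The main obstacle I anticipate is bookkeeping rather than ideas: after substituting $v=\phi$, the terms $v_{xx}$, $v_{yy}$, $v_{zz}$ expand into lengthy chain-rule expressions involving $\phi_{u}$, $\phi_{uu}$, $\phi_{xu}$, etc., and one must carefully collect coefficients of each independent jet monomial to obtain a correct determining system. A minor subtlety is ensuring that $\lambda$ is treated as a genuinely arbitrary function (not a constant) before comparing coefficients, since this is what permits a nontrivial answer for more general PDEs, even though in the present case all jet-coefficient constraints force $\lambda=0$.
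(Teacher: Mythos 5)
Your proposal is correct and follows essentially the same route as the paper: Ibragimov's formal Lagrangian, the adjoint equation $F^{*}=-v_{t}-au\,v_{x}+v_{xx}+v_{yy}+v_{zz}$, and the substitution condition $F^{*}\big|_{v=\phi}=\lambda F$, which forces $\lambda=0$, $\phi_{u}=\phi_{x}=0$ and $\phi_{t}=\phi_{yy}+\phi_{zz}$. The paper merely records the particular family $\Psi=c_{1}yz+c_{2}y+c_{3}z+c_{4}$ (later used for the conserved vectors), which satisfies exactly your constraint, so your more general characterization — with $\phi\equiv 1$ as the simplest nontrivial witness — subsumes the paper's choice.
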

\begin{proof}
The adjoint equation corresponding to Eq.\eqref{11} can be written according to the procedure mentioned in \cite{ibragimov2011nonlinear}
\begin{equation}
S=\frac{\delta L}{\delta u}= -\frac{\partial \Psi}{\partial z} + \frac{\partial^2 \Psi}{\partial x^2}+ \frac{\partial^2 \Psi}{\partial y^2} + \frac{\partial^2 \Psi}{\partial z^2}\text{,}
\end{equation}
where $L$ is the formal Lagrangian defined as 
\begin{equation}
L=\Psi(x,y,z,t) \Delta, 
\end{equation}

where\hspace{0.1cm} $\Delta= u_{t}+auu_{x}+u_{x x}+u_{y y}+u_{z z}$ and $\Psi(x,y,z,t)$ is the dependent variable. 

The Euler-Lagrange operator $\frac{\delta }{\delta u}$ with respect to u is as follows:
\begin{equation}
\frac{\delta}{\delta u} = \frac{\partial}{\partial u} - D_t \frac{\partial}{\partial u_t} - D_x \frac{\partial}{\partial u_x} - D_y \frac{\partial}{\partial u_y}- D_z \frac{\partial}{\partial u_z} + D_x^2 \frac{\partial}{\partial u_{xx}} + D_x D_y \frac{\partial}{\partial u_{xy}}+ D_x D_z \frac{\partial}{\partial u_{xz}} + D_y^2 \frac{\partial}{\partial u_{yy}}+ D_z^2 \frac{\partial}{\partial u_{zz}}  + \cdots ,
\end{equation}
where $D_{x}$, $D_{y}$, $D_{z}$ and $D_{t}$ are total differential operators with respect to $x,y, z \hspace{0.1cm} \text{and} \hspace{0.1cm} t$.

The given Eq. \eqref{11} is nonlinearly self-adjoint \cite{ibragimov2011nonlinear} if it satisfies the following condition: 
\begin{equation}\label{e61}
    S \mid_{(\Psi=G(x,y,z,t,u))} = \delta_{1} \Delta\text{,}
\end{equation}
where $G(x,y,z,t,u)$ is nonzero function, $\delta_{1}$ is unknown to be determined by equating coefficient of $u_{t}$ to zero, and we obtain
\begin{equation}\label{lm}
    \delta_{1}=-\Psi_{u}\text{.}
\end{equation}
Substituting the value of $\delta_{1}$ into Eq.\eqref{lm}, one obtain
\begin{equation}\label{e63}
\Psi=(c_1 y z+ c_2 y+ c_3 z+ c_4)\text{.}
\end{equation}
Hence, the given Eq. \eqref{11} is nonlinearly self-adjoint under \eqref{e63}.
\end{proof}
\section{Conservation Laws} \label{12}
\quad \quad In mathematics, conservation laws are those mathematical expressions that illustrate the invariance of certain physical attributes, namely mass, momentum, and energy, concerning time. Conservation laws help us understand fluid flow and heat transfer, and analyze symmetry and efficiency. Conservation laws lead to the development of deriving PDEs, demonstrating the equilibrium of physical quantities. Ibragimov proposed the adjoint equation concept to obtain conservation laws.
\subsection{Construction of Conservation Laws Using Nonlinear Self-Adjointness}
\quad \quad Ibragimov \cite{ibragimov2011nonlinear} proposed the concept for the construction of conserved vectors. His work contributes to the advancement of understanding the relation between symmetries and conserved quantities. Conservation laws can be constructed by many methods, like the direct method, multiplier method, variational method, and many more, but Ibragimov's method has been considered a more effective method to construct conservation laws, as it applies to both variational and nonvariational systems.   \\
Let $\eta=(\eta^{x}, \eta^{y}, \eta^{z}, \eta^{t})$ be the conserved vector, satisfying the conservation form
\begin{equation}
    D_{x} \eta^{x}+D_{y} \eta^{y}+D_{z} \eta^{z}+D_{t} \eta^{t}=0\text{,}
    \end{equation}
and generated by the infinite symmetry 
\begin{equation}
    \mathbb{Z}=\xi_{1}\frac{\partial}{\partial x}+\xi_{2}\frac{\partial}{\partial y}+\xi_{3}\frac{\partial}{\partial z}+\xi_{4}\frac{\partial}{\partial t}+\xi_{5} \frac{\partial}{\partial u}\text{,}
\end{equation}
where the conserved vectors $\eta^{x},\eta^{y},\eta^{z}$ and $\eta^{t}$ can be obtained from the ideas developed in \cite{ibragimov2011nonlinear}
\begin{equation}\label{e766}
\begin{split}
& \eta^{x}=\xi_{1} L+\omega \frac{\partial L}{\partial u_{x}}-\omega D_{x} (\frac{\partial L}{\partial u_{xx}})+\frac{\partial L}{\partial u_{xx}}D_{x}(\omega)\text{,} \\
&\eta^{y}= \xi_{2} L+\omega \frac{\partial L}{\partial u_{y}}-\omega D_{y} (\frac{\partial L}{\partial u_{yy}})+\frac{\partial L}{\partial u_{yy}}D_{y}(\omega)\text{,}\\
& \eta^{z}= \xi_{3} L+\omega \frac{\partial L}{\partial u_{z}}-\omega D_{z} (\frac{\partial L}{\partial u_{zz}})+\frac{\partial L}{\partial u_{zz}}D_{z}(\omega)\text{,} \\  &\eta^{t}=\xi_{4} L+\omega \frac{\partial L}{\partial u_{t}}\text{,}
\end{split}
\end{equation}
with $\omega=\xi_{5}-\xi_{1}u_{x}-\xi_{2}u_{y}-\xi_{3}u_{z}-\xi_{4}u_{t}$.

Furthermore, we evaluate the conserved vectors for the infinitesimal generators, as in the following cases:
\renewcommand{\arraystretch}{1.2}
\setlength{\arrayrulewidth}{0.4mm}
\begin{table}[H]
    \centering
    \begin{tabular}{c|c}
        \hline
        \textbf{Lie Algebra} & \textbf{Conserved Vectors} \\
        \hline
        & $\eta^x_1 =\Psi \frac{x}{2}(u_t + u_{xx} + u_{yy} + u_{zz}) + a\bigg(-\frac{u}{2} - \frac{x}{2} u_x - \frac{y}{2} u_y - \frac{z}{2} u_z- t u_t \bigg)\Psi u$ \\ 
        & $ + \big( -u_x - x u_{xx} - y u_{xy} - z u_{xz} - t u_{tx} \big)\Psi$, \\
        $\mathfrak{D}_{1}$ & $\eta^y_1 = \frac{y}{2}\Psi(u_t + au_x + u_{yy} + u_{zz}) + \bigg(\frac{u}{2} + \frac{x}{2} u_x + \frac{y}{2} u_y + \frac{z}{2} u_z + t u_t \bigg)(c_1 z + c_2)$ \\
        & $ - (u_y + \frac{x}{2} u_{xy} + \frac{z}{2} u_{yz} + t u_y)\Psi$, \\
        & $\eta^z_1 = \frac{z}{2}\Psi(u_t + au_x + u_{yy} + u_{zz}) + \bigg(\frac{u}{2} + \frac{x}{2} u_x + \frac{y}{2} u_y + \frac{z}{2} u_z + t u_t \bigg)(c_1 y + c_3)$ \\ 
        & $ - (u_z + \frac{x}{2} u_{xz} + \frac{y}{2} u_{yz})\Psi$, \\
        & $\eta^t_1 = - \bigg(\frac{u}{2} + \frac{x}{2} u_x + \frac{y}{2} u_y + \frac{z}{2} u_z + t u_t \bigg)\Psi + t \Psi (u_t + au_x + u_{yy} + u_{zz})$. \\ 
        \hline
        & $\eta^x_2 =-a\Psi u u_{t} -\Psi u_{t x}$, \\
        $\mathfrak{D}_{2}$ & $\eta^y_2 = -\Psi u_{t y}+ (c_{1} z + c_{2}) u_{t}$, \\ 
        & $\eta^z_2 = -\Psi u_{t z}+ (c_{1} y + c_{3}) u_{t}$, \\
        & $\eta^t_2 = -\Psi u_{t}+\Psi (u_{t}+ a u{} u_{x}+ u_{x x}+ u_{y y}+ u_{z z})$. \\ 
        \hline
        & $\eta^x_3 =au\Psi(-z u_{y}+ y u_{z})+\Psi(-z u_{x y}+ y u_{x z})$, \\
        $\mathfrak{D}_{3}$ & $\eta^y_3= \Psi(-z u_{y y}+ u_{z}+ y u_{y z})-(c_{1} z + c_{2})(-z u_{y}+ y u_{z})$ \\
        &$z\Psi (u_{t} + a u{} u_{x}+u_{x x} + u_{y y} + u_{z z})$, \\
        & $\eta^z_3 = \Psi(-u_{y}-z u_{y z}+ y u_{z z})-(c_{1} y + c_{3})(-z u_{y}+ y u_{z})$\\
        &$-y\Psi (u_{t} + a u{} u_{x}+u_{x x} + u_{y y} + u_{z z})$,\\
        & $\eta^t_3 = \Psi(-z u_{y}+y u_{z})$. \\ 
        \hline
         & $\eta^x_4 =-a\Psi u u_{y}- u_{x y}\Psi$,\\
        $\mathfrak{D}_{4}$ & $\eta^y_4 = -\Psi u_{y y}+(c_{1} z + c_{2}) u_{y}+\Psi(u_{t} + a u{} u_{x} + u_{x x} + u_{y y} + u_{z z})$, \\ 
        & $\eta^z_4 =-\Psi u_{y z}+(c_{1} y + c_{3}) u_{y}$, \\
        & $\eta^t_4 =-\Psi u_{y}$. \\ 
        \hline
        & $\eta^x_5 =a\Psi u{} (1/a- t u_{x})-(c_1 y z + c_2 y + c_3 z + c_4)t u_{x x}+ \Psi t (u_{t} + a u{} u_{x} +u_{x x} + u_{y y} + u_{z z})$,\\
        $\mathfrak{D}_{5}$ & $\eta^y_5 = -\Psi t u_{x y} -(c_{1} z+ c_{2})(1/a- t u_{x})$, \\ 
        & $\eta^z_5 = -\Psi t u_{x z} -(c_{1} y+ c_{3})(1/a- t u_{x})$, \\
        & $\eta^t_5 =\Psi(1/a- t u_{x})$. \\ 
         \hline
          & $\eta^x_6 =a\Psi u u_{x}+ u_{x x}\Psi+ \Psi(u_{t} + a u{} u_{x} + u_{x x} + u_{y y} + u_{z z})$,\\
        $\mathfrak{D}_{6}$ & $\eta^y_6 = \Psi u_{x y}-(c_{1} z + c_{2}) u_{x}$, \\ 
        & $\eta^z_6 = \Psi u_{x z}-(c_{1} y + c_{3}) u_{x}$, \\
        & $\eta^t_6 =\Psi u_{x}$. \\
        \hline
          & $\eta^x_7 =a\Psi u u_{z}+ u_{x z}\Psi$,\\
        $\mathfrak{D}_{7}$ & $\eta^y_7= \Psi u_{y z}-(c_{1} z + c_{2}) u_{z}$, \\ 
        & $\eta^z_7 = \Psi u_{z z}-(c_{1} y + c_{3}) u_{z}+ \Psi(u_{t} + a u{} u_{x} + u_{x x} + u_{y y} + u_{z z})$, \\
        & $\eta^t_7 = \Psi u_{z}$. \\ 
        \hline
         \end{tabular}
    \caption{Lie Algebra and Conserved Vectors}
    \label{tab:conserved_vectors}
    \end{table}
     \section{Comparison with MSE Method} \label{13}
  Kamruzzaman Khan \cite{khan2014exact} applied the Modified Simple Equation (MSE) method to obtain exact traveling-wave solutions of the (3+1)-dimensional Zakharov–Kuznetsov (ZK) equation. The solutions obtained via MSE correspond to particular traveling-wave profiles that can be recovered as special cases of the more general invariant solutions derived through the Lie symmetry approach. In our analysis, the Lie symmetry method yields a larger set of symmetry generators and corresponding invariants, enabling the construction of a broader family of exact solutions. The MSE solutions arise when specific parameter constraints are imposed on the general Lie-invariant solutions. This relationship not only shows that MSE is contained within the Lie framework but also allows a direct numerical comparison on a common reduced form of the equation. Such comparisons confirm that the MSE profiles match the corresponding Lie-invariant solutions within numerical precision, while the Lie approach additionally uncovers more symmetries and solution structures, offering a deeper understanding of the equation's dynamics.
\section{Conclusions} \label{14}
\begin{sloppypar}
\quad \quad  In this research paper, the behavior of wave amplitude, propagation of soliton, and anisotropic behavior in a strong magnetized field using the ZK equation has been effectively analyzed. Infinitesimal generators and vector fields have been successfully derived using a one-parameter Lie group of transformations. Using these infinitesimal generators, we successfully constructed a commutator table followed by the adjoint representation table to find invariants of the Zakharov–Kuznetsov (ZK) equation. Many cases have been created using these invariants to analyze how the wave behaves under a strong magnetic field. Different cases yield different solutions, which helps in understanding the propagation of solitons in different media. Additionally, the graphical representations, including 3D visualizations, contour sketches, and 2D visualizations, have been provided to better understand the propagation of the wave in a magnetized field. We constructed different graphs by taking different values of variables to check the anisotropic nature of space plasmas. A traveling wave solution has been derived to identify a soliton-like structure. Furthermore, we explore modulation instability using the dispersion relation, which helps in finding the stability and instability zones. To highlight the similarities and the differences, the solutions derived from Lie symmetry are compared with the MSE method. Additionally, the numerical method is implemented to validate the accuracy of the analytical solutions. Moreover, the non-linear self-adjointness property has been derived, which helps in the construction of conservation laws given by Ibragimov. The obtained graphs and their solutions help in better understanding the anisotropic environments and wave behavior with applications in both fusion systems and space plasma. The results have applications in many fields, like space plasmas, ion-acoustic plasmas, dusty plasmas, etc. 
\end{sloppypar}
\section{Future Recommendation}
\quad \quad Future work can focus on extending the $(3+1)$-dimensional Zakharov–Kuznetsov equation to generalized forms with additional nonlinear terms or variable coefficients, along with numerical simulations to study wave stability and interactions. Investigating the effects of external forces, higher-order conservation laws, and coupled systems can provide deeper insights. Validating theoretical results through real-world applications in plasma physics, optics, and acoustics is another essential direction.
\section{Acknowledgements}
\quad \quad The first author, Anshika Singhal, and the corresponding author, Rajan Arora, acknowledge the financial support from the Council of Scientific and Industrial Research (CSIR), New Delhi, India, for the sponsored project with sanction order No. 25/0327/23/EMR-II dated 03/07/2023. The second author, Urvashi Joshi, acknowledges the financial support from the Ministry of Education (MoE).
\section{Conflict of Interest}
According to the author's report, there is no conflict of interest associated with this article.
\bibliographystyle{elsarticle-num}
\bibliography{ZK.bib}
\end{document}